\numberwithin{equation}{section}
\newcommand{\SSN}{\mathcal S_{[N]}^{++}}
\newcommand{\LF}{\hat{\Phi}_n(L)}
\newcommand{\TL}{\Tilde{L}_n}
\DeclareMathOperator{\di}{\mathbf{d}}
\DeclareMathOperator{\Tr}{Tr}
\newcommand{\cD}{\mathcal{D}}
\newcommand{\cF}{\mathcal{F}}
\newcommand{\cP}{\mathcal{P}}
\newcommand{\cY}{\mathcal{Y}}
\newcommand{\bY}{\mathbf{Y}}
\newcommand{\bbP}{\mathbb{P}}
\newcommand{\YY}{\mathbb{Y}}
\newcommand{\PP}{\mathbb{P}}
\newcommand{\defeq}{\mathrel{\mathop:}=}
\newcommand{\E}{\mathbb{E}}
\newcommand{\argmax}{\arg\!\! \max}
\newcommand*\diff{\mathop{}\!\mathrm{d}}
\newcommand{\hp}{\hat{p}}
\theoremstyle{plain}
\newtheorem{proposition}{Proposition}[section]
\newtheorem{lemma}[proposition]{Lemma}
\newtheorem{definition}[proposition]{Definition}
\newtheorem{defin/prop}[proposition]{Definition/Proposition}
\newtheorem{theorem}[proposition]{Theorem}
\title{Asymptotic properties of maximum likelihood estimators for determinantal point processes}
\author[1]{\sc  Yaozhong Hu\thanks{Supported by an NSERC Discovery grant and a centennial  fund from University of Alberta.  
Email: Yaozhong@ualberta.ca  }}
\author[2]{\sc  Haiyi Shi \thanks{Corresponding author. Email: haiyi\_shi@sfu.ca}}
\affil[1]{ Department of Mathematical and Statistical Sciences, University of Alberta, Edmonton, AB, Canada.}
\affil[2]{Department of Statistics and Actuarial Science, Simon Fraser University, Burnaby, BC, Canada.}
\date{} 
\begin{document}
 \maketitle
\begin{abstract}
We obtain  the almost sure strong consistency  and the Berry-Esseen type bound for    the maximum likelihood estimator   $\Tilde L_n$ of the ensemble $L^\star$ for determinantal point processes (DPPs),  strengthening  and completing   previous work
initiated in   Brunel, Moitra, Rigollet,  and Urschel \cite{brunel2017maximum}. Numerical algorithms of estimating DPPs are developed and simulation studies are performed. Lastly, we give explicit formula and a detailed discussion for the maximum likelihood estimator for 
blocked determinantal matrix of two by two submatrices  and compare it  with the frequency 
method.
\end{abstract}

\section{Introduction}
Determinantal point processes (DPPs) arise from random matrix theory \cite{ginibre1965statistical} and are first introduced to give the probability distribution of Fermionic system in thermal equilibrium in quantum physics \cite{Mac75}. 
Since then, DPPs have been found in various aspects of mathematics, including for example, loop-free Markov chains \cite{borodin2010adding},  edges of uniformly spanning trees \cite{burton1993local} and in particular, recently to image processing \cite{image}. 

In the seminal work   \cite{KT12},    Kulesza and Taskar show that DPPs demonstrate 
the unique characteristics comparing to   various other probabilistic models in the sense that they capture the
global repulsive behavior between items, give polynomial-time algorithms for statistical inference, and have geometrical intuition. Due to these advantages   DPPs have  played very important roles in    machine learning, especially in subset selection problems, such as documentary summarization, image search, and pose  determination \cite{KT12}, and 
so on. These real
 world applications necessitate the estimation of parameters of determinantal
point process models. In this context, maximum likelihood estimator $\tilde L_n$ of the true parameter $L^\star$  is a natural
choice, which in general leads to a non-convex optimization problem in our situation. Along
this direction, Kulesza and Taskar split DPPs model into diversity part and quality part and only learn the quality part while the first part is fixed. They conjecture  that the problem of learning the likelihood of DPPs is NP-hard, which has been proven by \cite{pmlr-v178-grigorescu22a}   a decade later. Brunel, Moitra, Rigollet,  and Urschel \cite{brunel2017maximum} first study the local geometry of the expected maximum likelihood estimation of DPPs, that is, the curvature of likelihood function around its maximum. Then they prove that the maximum likelihood estimator converges to true values in probability and establish the corresponding central limit theorem. Motivated by these works,  
the present  paper contains three further contributions.
\begin{enumerate}
	\item[(i)] For the consistency we improve the convergence in probability of $\tilde L_n$ to  $L^\star$ obtained in \cite{brunel2017maximum} to   almost sure convergence. This  almost sure convergence property is important in applications since in reality we only observe a single sample path. Theoretically, it is well-known that convergence almost surely is always a hard problem. It is also the case here. We get around this difficulty  by making use of  the Wald's consistency theorem in our situation.  
	\item[(ii)] The asymptotic normality of the maximum estimator is also obtained in \cite{brunel2017maximum}. This means that 
	$\sqrt{n} (\tilde L_n-L^\star)$ is   approximately a normal random variable. We study the rate of convergence associated this normality approximation. Namely, we obtain a Berry-Esseen type bound for the maximum likelihood estimator $\tilde L_n$.  
	\item[(iii)] Due to non-identifiability of DPPs  observed   in \cite{kulesza2012learning}, there is an involvement of $\hat D_n$ in the definition of $\tilde L_n$. In general case this involvement of $\hat D_n$ seems unavoidable.  To get rid of this $\hat D_n$ we consider  special cases  of blocked two by two matrix ensembles, where we obtain explicit form  of the maximum likelihood estimator,
	and consequently, no involvement of $\hat D_n$ in these cases. This  explicit form makes our 
	computations much efficient. When we apply  various numerical methods such as Newton-Raphson,
	Stochastic Gradient Descent method (SGD) to general maximum likelihood estimator we can hardly 
	obtain satisfactory results.  
\end{enumerate}   

The paper is organized as follows. In Section 2 we introduce some basic definitions and recall 
some properties of DPPs.
We present a proof of  a well-known result about the characterization of the determinantal process
(Theorem 2.2 in next section),  whose proof
we cannot find explicitly in literature.  In Section 3 we present   our main results for the almost sure consistency and the Berry-Esseen type theorem for the maximum likelihood estimator.   Newton-Raphson and stochastic gradient descent (SGD) algorithm are  suggested to find the maximum likelihood estimator in general case in Section 4, and  the results are reported although they are not satisfactory.  In Section 5, we 
discuss the explicit maximum likelihood estimator  for  the two by two blocked ensembles and since the explicit 
form is available we get very satisfactory numerical results. 
Some concluding remarks are given in Section 6.

\section{Preliminary}

 In this paper, we focus on the   finite state point process.  Let   the ground set  $\cY = [N]  :=  \{1,2, \cdots ,N\}$ be a set of $N$ points.
The set of all subsets of $\cY$ is denoted by $\YY$. 
Let $(\Omega, \cF, \PP)$ be a complete probability space
on which the expectation is denoted by $\E$. 
A (finite) point process  on a ground set $\cY$ is 
a random variable $\bY: \Omega\to \YY$.  
We also call the 
  probability measure $\cP$ for $\bY$ the point process over the subsets
of  $\cY$. Random subsets drawn from the point process $\cP$ can be any subset between null set and full set $\cY$.

\begin{definition}
     A  $\YY$-valued point process $ \bY $ is called a determinantal point process if   there is a symmetric and positive definite   $ N \times N $ symmetric matrix  matrix $K$   such that   for every fixed set $A \subseteq \cY$,
\begin{equation}
\bbP(A \subseteq \bY) = \det(K_{A})\,,  \label{def}
\end{equation}
where $K_{A}$ denotes the restriction of  
$K$ to 
the subset A, that is, $K_{A} \defeq [K_{i,j}]_{i,j \in A}$. 
 \end{definition}

 To our best knowledge the following  sufficient and necessary condition for a symmetric kernel $K$ to define a determinant point process is often mentioned without directly proving it so we provide a proof as follows.

\begin{theorem} 
    The sufficient and necessary condition for a symmetric kernel $K$ to define a determinant point process is $ 0 \preceq K \preceq I $.
\end{theorem}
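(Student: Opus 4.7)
For necessity, given that $\mathbf{Y}$ is a well-defined DPP with kernel $K$, the plan is to extract both eigenvalue bounds from the non-negativity of probabilities. The bound $K \succeq 0$ is immediate: each principal minor satisfies $\det(K_A) = \mathbb{P}(A \subseteq \mathbf{Y}) \ge 0$, and a real symmetric matrix whose principal minors are all non-negative is positive semi-definite. For $K \preceq I$, I would compute the avoidance probability $\mathbb{P}(\mathbf{Y} \cap A = \emptyset)$ by inclusion-exclusion, using
\begin{equation*}
\mathbf{1}_{\mathbf{Y} \cap A = \emptyset} = \prod_{i \in A}(1 - \mathbf{1}_{i \in \mathbf{Y}}) = \sum_{B \subseteq A} (-1)^{|B|} \mathbf{1}_{B \subseteq \mathbf{Y}},
\end{equation*}
to obtain $\mathbb{P}(\mathbf{Y} \cap A = \emptyset) = \sum_{B \subseteq A}(-1)^{|B|}\det(K_B)$. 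A direct expansion (multilinearity of the determinant in the rows and columns indexed by $A$) gives the algebraic identity $\sum_{B \subseteq A}(-1)^{|B|}\det(K_B) = \det((I-K)_A)$. Since the left-hand side is a probability and the identity holds for every $A \subseteq [N]$, every principal minor of $I - K$ is non-negative, so $I - K \succeq 0$.

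For sufficiency, I assume $0 \preceq K \preceq I$ and diagonalize $K = \sum_{i=1}^N \lambda_i v_i v_i^T$ with $\lambda_i \in [0,1]$, writing $U = [v_1, \ldots, v_N]$ for the orthogonal matrix of eigenvectors. I would construct $\mathbf{Y}$ by a two-stage procedure: first choose a random eigenvector set $V \subseteq [N]$ by independently including each index $i$ with probability $\lambda_i$; then, conditional on $V$, sample the \emph{elementary} (projection) DPP with kernel $K^V = \sum_{i \in V} v_i v_i^T$. The projection DPP is known to exist and to satisfy $\mathbb{P}(A \subseteq \mathbf{Y} \mid V) = \det(K_A^V)$ for all $A$; this can be established by an explicit orthogonal-projection sampling algorithm combined with the Cauchy-Binet formula $\det(K_A^V) = \sum_{S \subseteq V,\,|S|=|A|} |\det(U_{A,S})|^2$, which is nonzero only when $|A| \le |V|$.

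The final step is to check that marginalizing over $V$ recovers $\mathbb{P}(A \subseteq \mathbf{Y}) = \det(K_A)$. I would write
\begin{equation*}
\mathbb{P}(A \subseteq \mathbf{Y}) = \sum_{V \subseteq [N]} \Big(\prod_{i\in V}\lambda_i\Big)\Big(\prod_{j \notin V}(1-\lambda_j)\Big) \det(K_A^V),
\end{equation*}
and match this to the Cauchy-Binet expansion $\det(K_A) = \sum_{|S|=|A|} \big(\prod_{i\in S}\lambda_i\big) |\det(U_{A,S})|^2$ by swapping the order of summation and invoking the identity $\sum_{V \supseteq S}\prod_{i \in V \setminus S}\lambda_i \prod_{j \notin V}(1-\lambda_j) = 1$, which collapses the outer sum over $V$. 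The main obstacle I anticipate is the Cauchy-Binet bookkeeping across the three simultaneous index sets ($A$, $V$, and $S$); the identity itself is a standard algebraic manipulation once the sums are arranged in the correct order.
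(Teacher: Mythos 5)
Your necessity argument is essentially the paper's: nonnegativity of the principal minors $\det(K_A)=\bbP(A\subseteq\bY)$ gives $K\succeq 0$, and the inclusion--exclusion computation of the avoidance probability, $\bbP(\bY\cap A=\emptyset)=\sum_{B\subseteq A}(-1)^{|B|}\det(K_B)=\det(I_A-K_A)\ge 0$ for every $A$, gives $K\preceq I$; the paper does exactly this (first for $A=[N]$, then for general $A$), identifying the alternating sum with the characteristic polynomial of $K_A$ evaluated at $1$. The sufficiency direction is where you genuinely diverge. The paper stays inside finite linear algebra: it defines the atomic probabilities by M\"obius inversion of the prescribed marginals, collapses the alternating sum via Schur complements to the closed form $\bbP(\bY=A)=(-1)^{|\bar A|}\det(K-I_{\bar A})$, and checks that this is nonnegative when $0\preceq K\preceq I$ (the atoms automatically sum to $\det(K_\emptyset)=1$). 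You instead give the spectral mixture construction of Hough--Krishnapur--Peres--Vir\'ag: Bernoulli$(\lambda_i)$ selection of an eigenvector set $V$ followed by the projection DPP with kernel $K^V=\sum_{i\in V}v_iv_i^T$, with Cauchy--Binet doing the bookkeeping; your exchange of summation and the collapsing identity $\sum_{V\supseteq S}\prod_{i\in V\setminus S}\lambda_i\prod_{j\notin V}(1-\lambda_j)=1$ are correct and do recover $\det(K_A)$. Your route buys more---an actual sampling procedure and a decomposition that carries over to the infinite-dimensional theory---but its load-bearing lemma, the existence of the elementary (projection) DPP, is imported rather than proved, and that existence claim is precisely the special case $K=K^V$ of the theorem being proved. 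To be self-contained you would still have to verify it (for instance by checking that $\bbP(\bY=A)=\det(K^V_A)$ for $|A|=|V|$, and $0$ otherwise, defines a probability measure with the stated marginals), whereas the paper's Schur-complement computation needs no outside input.
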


\begin{proof} 
     Since the marginal probability of empty set is the total probability space, $\bbP(\Omega) = \bbP(\emptyset \subseteq \bY) =1$. We set $\det(K_{\emptyset}) =1$. Because $\PP$ is a probability measure, all principal minors of $K$, i.e. $\det(K_{A})$ must be nonnegative, and thus K itself must be positive semidefinite, that is, $K \succeq 0 $.
     
     Furthermore, from $\bbP(\emptyset = \bY) + \bbP(\bigcup_{i=1}^{N}\{i \in \bY \} ) = 1 $  and using inclusion–exclusion principle we get
    \begin{eqnarray}
     \bbP(\bigcup_{i=1}^{N}\{i \in \bY \} ) &=& \sum_{i \in [N]} \bbP(i\in \bY) -  \sum_{\{i,j\} \subset [N]} \bbP(\{i,j\}\subseteq \bY) + \cdots  \nonumber \\ 
     &&\qquad \cdots + (-1)^{N-1} \bbP( [N]\subseteq\bY)  \nonumber \\
     &=& \sum_{|A| =1} \det(K_{A}) - \sum_{|A| =2}\det(K_{A}) + \cdots  \nonumber \\
     &&\qquad \cdots   + (-1)^{N-1} \det(K) \nonumber \\
     &=& 1- \det(I-K)\,.  \label{inclusion}
     \end{eqnarray}

The above last equality follows from the characteristic polynomial. Equation \eqref{inclusion} also means
\begin{equation}
    \bbP(\emptyset = \bY) = \det(I-K) \geq 0. \label{empty}
\end{equation}
Similarly, we are able to show that  $\bbP(\emptyset = \bY \cap A) = \det(I_{A}-K_{A}) \geq 0$ for any subset $A \subseteq [N]$ and hence $K \preceq I$. So the necessary condition for a symmetric matrix to give a determinantal process is $ 0 \preceq K \preceq I$. 


This condition turns out to be sufficient: any $0 \preceq K \preceq I $ defines a DPP. To prove this, it's sufficient to show that for every $A \subseteq [N]$, the atomic probability is well-defined, that is, $0 \leq\bbP(A = \bY) \leq 1$. The probability being less or equal to 1 holds since $K \preceq I $. For the other inequality, we assume $K_{A}$ is invertible.\footnote{if $K_{A}$ is not invertible, we immediately get $\bbP(A = \bY) = 0$.} Then using Schur complement and characteristic polynomial, we have
\begin{eqnarray}
    \bbP(A = \bY) &=& \bbP(A \subseteq \bY) - \bbP(\bigcup_{i\in \Bar{A} }\{A\cup\{i\} \subseteq \bY \}) \nonumber \\
    &=& \det(K_{A}) - \sum_{i \in \Bar{A}} \det(K_{A\cup\{i\}}) + \sum_{\{i,j\} \subseteq \Bar{A}} \det(K_{A\cup\{i,j\}})+\nonumber \\
    &&\qquad \cdots + (-1)^{|\Bar{A}|} \det(K) \nonumber \\
    &=& \det(K_{A}) - \sum_{i \in \Bar{A}} \det(K_{A}) \det(K_{ii} - K_{\{i\},A} K^{-1}_{A} K_{A,\{i\}} ) \nonumber \\
    &&\qquad+ \sum_{\{i,j\} \subseteq \Bar{A}} \det(K_{A})\det(K_{\{i,j\}} - K_{\{i,j\},A} K^{-1}_{A} K_{A,\{i,j\}}) +\nonumber \\
    &&\qquad \cdots + (-1)^{|\Bar{A|}} \det(K_{A}) \det(K_{\Bar{A}} - K_{\Bar{A},{A}} K^{-1}_{A} K_{A,\Bar{A}} ) \nonumber \\
    &=& (-1)^{|\Bar{A}|} \det(K_{A}) \det( (K_{\Bar{A}} - K_{\Bar{A},{A}} K^{-1}_{A} K_{A,\Bar{A}})-I_{\Bar{A}}) \nonumber \\
    &=&(-1)^{|\Bar{A}|} \det( K- I_{\Bar{A}})  ,\label{poly}
\end{eqnarray}
where $K_{A,B }$ denotes the matrix obtained from $K$ by   keeping only those  entries whose rows belong to $A$ and whose columns belong to $B$ (if $A = B$ we simply have  $K_{A}$.), $|A|$ denotes the cardinality of subset $A$, and $\Bar{A}$ the complement of set $A$. Here we use a slight abuse of notation of $I_{\bar{A}}$. We refer it to an N × N matrix whose restriction to $\bar{A}$ is $I_{\bar{A}}$ and has zeros everywhere else. Since $0 \preceq K\preceq I$, $\bbP (A = \bY) = |\det(K-I_{\Bar{A}})| \geq 0$.
This completes the proof of the theorem. 
\end{proof}
 
It is quite inconvenient to work with marginal kernels since their eigenvalues should be bounded by 0 and 1, and the marginal probability is not very appropriate to describe real world data. Here we introduce a slightly smaller class of DPPs called L-ensembles, where L only needs to be positive semi-definite.
\begin{definition}
	A point process is called an L-ensemble if it is defined through a real, symmetric, positive semi-definite $N\times N$ matrix $L$:
	\begin{equation}
		\bbP_{L}(A = \bY) \propto \det(L_{A}),
	\end{equation}
	where $A \subseteq \cY$ is a fixed subset.
\end{definition}
The proportional coefficient is given by \cite{KT12}:
$$\frac{1}{\sum_{A \subseteq \cY} \det(L_{A})} =\frac{1}{\det{(L +I)}}.$$
Moreover, \cite{Mac75} shows that L-ensembles are indeed DPPs, where marginal kernels K is given by $ L(L+I)^{-1} = I - (L+I)^{-1}.$

\section{Maximum likelihood Estimator  of DPPs}

Let $ \mathcal{S}_{[N]}^{++}$  denote  the set of all $N$ by $N$ positive definite matrices.  Suppose that  $Z_{1},...,Z_{n} $ is a sample of size $n$      from a ${\text{DPP}(L^{\star})}$    for some given but  unknown $L^{\star} \in \mathcal{S}_{[N]}^{++}$. This  means that $Z_1, \cdots, Z_n$ are independent $\YY$ valued random subsets whose probability law is given by the ensemble $L^\star$.      The goal of this paper 
is to estimate this $L^\star$  from this given 
sample $Z_1, \cdots, Z_n$.   The most popular one is the maximum likelihood 
estimator $\hat N_n$. We shall recall its construction
this our specific setting and study its strong consistency, asymptotic normality, and associated Berry-Esseen theorem.

Let $P_{L}(Z_{i})$ denote the probability of the L-ensemble being $Z_{i}$.  The scaled log-likelihood function 
associated with the sample $Z_1, \cdots, Z_n$  is
\begin{equation}
\hat{\Phi}_n(L) = \frac{1}{n} \sum_{i=1}^{n} \log P_{L}(Z_{i})  = 
\sum_{J \subseteq [N]} \hat{p}{(J)}  \log \det(L_{J})- \log\det(L+I), \label{LF}
\end{equation}
where     $L \in \mathcal{S}_{[N]}^{++} $  and  $$\hat{p}{(J)}= \frac{1}{n} \sum_{i=1}^{n} \mathbb{I}(Z_{i} = J),$$
with $\mathbb{I}(\cdot)$ standing  for the indicator function.
It is also useful to define the expected log-maximum likelihood function given the real kernel $L^{\star}$
\begin{equation}
    \Phi_{L^{\star}}(L) = \sum_{J \subseteq [N]}{p_{L^\star}(J)}  \log \det(L_{J}) - \log\det(L+I)\,, 
\end{equation}
where 
\begin{equation*}
    p_{L^{\star}}(J) = \E{(\hat{p}{(J)})} = \frac{\det{(L^{\star}_{J})}}{\det{(L^{\star}+I)}}.
\end{equation*}
In the sequel   $L^{\star}$ is always fixed.  To simplify writing we  let $\hat{p}_{J}$ denote $\hat{p}(J)$, $p^*_{J}$ denote $p_{L^*}(J)$ and $\Phi$ denote $\Phi_{L^{\star}}$.
\begin{definition} A maximum likelihood estimator
	$\hat L_N$ is a global maximizer of \eqref{LF}:
	\begin{equation}
		\hat L_n=\argmax_{ 
			L\in \mathcal{S}_{[N]}^{++}} \hat \Phi_n(L) 
		 \,. \label{e.3.3} 
	\end{equation}  
\end{definition} 


We notice that $\Phi{(L^{\star})} - \Phi{(L)} $ is the Kullback-Leibler divergence $\mathrm{KL}\big(\mathrm{DPP}(L^{\star}), \mathrm{DPP}(L)\big)$, which measures the difference between distributions of $\mathrm{DPP}(L^{\star})$ and of $\mathrm{DPP}(L)$. KL divergence is always non-negative:
$$\mathrm{KL}\big(\mathrm{DPP}(L^{\star}), \mathrm{DPP}(L)\big) = \Phi{(L^{\star})} - \Phi(L) \geq 0, \, \forall L \in \SSN.  $$
As a consequence $L^{\star}$ is the global maxima of the expected maximum function $\Phi(L)$.

The next Lemma gives the differential of the likelihood function $\hat{\Phi}_n(L)$.
\begin{lemma} \label{mainlemma}
The gradient of log-likelihood function $\LF$ defined in \eqref{LF} exists and is given by
\begin{equation}
    \diff \LF = \sum_{J \subseteq [N]} \hp_{J} L_{J}^{-1} - (L+I)^{-1}. \label{e.3.4}
\end{equation}
\end{lemma}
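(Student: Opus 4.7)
The plan is to differentiate $\LF$ term by term using Jacobi's formula
$$\diff \log\det(M) = \Tr(M^{-1}\,\diff M),$$
valid whenever $M$ is invertible. Since the sum in \eqref{LF} is finite (indexed by the $2^N$ subsets of $[N]$), linearity of the differential lets us interchange $\diff$ with $\sum_J$, so the whole computation reduces to handling the two building blocks $\log\det(L_J)$ and $\log\det(L+I)$.

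First I would check differentiability. For any $L\in \SSN$, the matrix $L+I \succ I$ is certainly invertible, and every principal submatrix $L_J$ of a positive definite matrix is again positive definite, hence invertible with $\det(L_J)>0$. Thus each term is smooth in a neighborhood of $L$ inside $\SSN$. For the second building block, Jacobi's formula directly gives
$$\diff \log\det(L+I)[H] = \Tr\!\bigl((L+I)^{-1} H\bigr),$$
so the corresponding contribution to the gradient is $(L+I)^{-1}$.

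For the first building block, I would exploit that the restriction map $L\mapsto L_J$ is linear, so $\diff L_J[H]=H_J$. Combined with Jacobi's formula this yields
$$\diff \log\det(L_J)[H] = \Tr\!\bigl(L_J^{-1} H_J\bigr) = \Tr\!\bigl(\widetilde{L_J^{-1}}\, H\bigr),$$
where $\widetilde{L_J^{-1}}$ denotes the $N\times N$ matrix obtained by padding $L_J^{-1}$ with zeros outside the block indexed by $J\times J$, in the same spirit as the abuse of notation introduced for $I_{\bar A}$ in the proof of Theorem 2.2. Reading off the gradient matrix, this contribution is precisely the $L_J^{-1}$ appearing in \eqref{e.3.4}.

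Summing over $J\subseteq[N]$ with weights $\hp_J$ and subtracting the $(L+I)^{-1}$ term produces exactly \eqref{e.3.4}. The only mildly subtle point, which I would flag in the write-up, is this notational convention of silently extending $L_J^{-1}$ by zeros; once that is fixed, the argument is a direct and routine application of Jacobi's formula and linearity, with no genuine obstacle.
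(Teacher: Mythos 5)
Your proposal is correct and follows essentially the same route as the paper: the paper derives the identity $\diff\det(L+I)(H)=\det(L+I)\Tr((L+I)^{-1}H)$ from the limit definition and the expansion of $\det(I+tA)$, which is exactly Jacobi's formula that you invoke, and then reads off the gradient from the directional derivatives $\Tr(L_J^{-1}H_J)-\Tr((L+I)^{-1}H)$ just as you do. Your explicit remark about zero-padding $L_J^{-1}$ to an $N\times N$ matrix is a point the paper leaves implicit, but it does not change the argument.
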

\begin{proof}
 We regard determinant as a multivariate function of $N \times N$ variables and then the directional derivative of $\det(L+I)$ along direction $H$ is given by
\begin{align}
    \diff \det(L+I) (H) =& \lim_{t \to 0}  
    \frac{\det(L+I+tH) -\det(L+I)}{t}\nonumber \\
    =& \lim_{t \to 0} \det(L+I)\Big[\frac{\det(I+t(L+I)^{-1}H)-1}{t}\Big] \nonumber\\
    =& \lim_{t \to 0} \det(L+I) \Big[\frac{1+t\Tr((L+I)^{-1}H)+o(t^{2})-1}{t}\Big]\nonumber \\
    =& \det(L+I)\Tr((L+I)^{-1}H), \label{directional derivative}
\end{align}
where the third equality follows from the power series representation of $\det(I+A)$. Then the directional derivative of $\LF$ along direction $H$ is
\begin{equation}
    \diff \LF (H) = \sum_{J \subseteq [N]}\hp_{J} \Tr(L_{J}^{-1}H_{J}) - \Tr((L+I)^{-1}H).
\end{equation}
In matrix form, the above equation becomes
\begin{equation}
    \diff \LF = \sum_{J \subseteq [N]} \hp_{J} L_{J}^{-1} - (L+I)^{-1}.
\end{equation}
This proves the lemma
\end{proof}
\subsection{Strong consistency}
One critical issue  for the   maximum likelihood estimation is its  consistency. Due to non-identifiability of DPPs  illustrated  in Theorem 4.1 in \cite{kulesza2012learning} , any element in $\{DL^{\star}D: D \in \cD \}$ defines the same DPP, where $\cD$ is the collection of all diagonal matrices whose entry is either 1 or -1. Moreover, if $L_1, L_2\in  \mathcal{S}_{[N]}^{++}$  defines the same DPP,  then there is a $D\in \cD$ such that $L_2=DL_1D$. 
Thus, we cannot expect that the maximum likelihood estimator $\hat  L_n$ will converge to $L^\star$. Instead, it should converge to $DL^\star D$ for some $D\in\cD$. 
Hence, for the consistency problem  we study the limit 
\begin{equation}
	\ell(\hat{L}_{n}, L^{\star}) := \min_{D\in \mathcal{D}} \Vert \hat{L}_{n} - DL^{\star}D \Vert_{F}.
	\label{e.3.8} 
\end{equation} 

\cite{brunel2017maximum} proves that this  distance converges to zero in probability. We shall prove a stronger version: the convergence  also holds almost surely. 
  
\begin{theorem} \label{strongconsistency}Let $Z_{1},...,Z_{n} $ be a  
	sample   of size $n$ from $Z \sim{\text{DPP}(L^{\star})}$.
	Let $\hat{L}_{n}$ be the maximum likelihood estimator of $L^\star$ defined by \eqref{e.3.3}.
	Then 	$ \ell(\hat{L}_{n}, L^\star)$ converges to zero almost surely.
\end{theorem}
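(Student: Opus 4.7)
The plan is to apply Wald's consistency theorem, suitably adapted to our non-identifiable and non-compact setting. By Gibbs' inequality, $\Phi(L^\star) - \Phi(L) = \mathrm{KL}(\mathrm{DPP}(L^\star), \mathrm{DPP}(L)) \geq 0$, with equality iff $\mathrm{DPP}(L) = \mathrm{DPP}(L^\star)$; the non-identifiability result from \cite{kulesza2012learning} then characterizes the argmax of $\Phi$ as the finite equivalence class $[L^\star] := \{D L^\star D : D \in \cD\}$. Thus $\ell(\hat L_n, L^\star) \to 0$ is equivalent to $\hat L_n$ eventually lying in every open neighborhood of the finite set $[L^\star]$.

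I would first establish a uniform strong law on compacta: for every compact $K \subset \SSN$, $\sup_{L \in K}|\hat\Phi_n(L) - \Phi(L)| \to 0$ almost surely. This follows directly because $\hat\Phi_n - \Phi = \sum_{J \subseteq [N]}(\hat p_J - p^\star_J)\log\det(L_J)$ is a finite sum indexed by the power set of $[N]$, each $\hat p_J \to p^\star_J$ a.s. by the SLLN for Bernoulli averages, and $L \mapsto \log\det(L_J)$ is continuous, hence bounded, on $K$. Next, I would establish tightness: there exist constants $0 < c < R$ and an almost sure event on which $cI \preceq \hat L_n \preceq RI$ for all large $n$. For the upper bound, writing $L \sim tM$ with $t \to \infty$, the log-likelihood $\hat\Phi_n(L)$ behaves at leading order like $(\tfrac{1}{n}\sum_{i=1}^n |Z_i| - N)\log t$, and since $\tfrac{1}{n}\sum|Z_i| \to \Tr(L^\star(L^\star+I)^{-1}) < N$ almost surely, this forces $\hat\Phi_n(L) \to -\infty$. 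For the lower bound, using the a.s. fact that $\hat p_{[N]} > 0$ eventually (since $p^\star_{[N]} > 0$), the term $\hat p_{[N]}\log\det(L)$ drives $\hat\Phi_n(L)$ to $-\infty$ as $\det(L) \to 0$. Both are incompatible with $\hat\Phi_n(\hat L_n) \geq \hat\Phi_n(L^\star) \to \Phi(L^\star) > -\infty$ a.s.

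Finally, the Wald closing argument: fix $\epsilon > 0$ and set $K_\epsilon := \{L : cI \preceq L \preceq RI,\ \ell(L, L^\star) \geq \epsilon\}$, which is compact and disjoint from $[L^\star]$. By continuity of $\Phi$ and the argmax characterization, $\delta := \Phi(L^\star) - \sup_{K_\epsilon}\Phi > 0$. On the intersection of the two almost sure events above, for all large $n$ we have $\hat L_n \in \{cI \preceq L \preceq RI\}$ and $\sup_{K_\epsilon}\hat\Phi_n < \Phi(L^\star) - \delta/2 < \hat\Phi_n(L^\star) \leq \hat\Phi_n(\hat L_n)$, which forces $\hat L_n \notin K_\epsilon$ and hence $\ell(\hat L_n, L^\star) < \epsilon$. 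The main obstacle is the tightness step: because $\SSN$ is an open, non-compact cone, one must control the \emph{empirical} objective $\hat\Phi_n$ (not only its deterministic limit $\Phi$) uniformly in $n$ along sequences approaching the boundary, which is precisely what the SLLN applied to $\tfrac{1}{n}\sum |Z_i|$ and to $\hat p_{[N]}$ delivers.
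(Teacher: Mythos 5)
Your overall architecture is the same Wald-type scheme the paper uses (identify the argmax of $\Phi$ as the class $\{DL^\star D: D\in\mathcal D\}$ via the KL divergence, get uniform control of $\hat\Phi_n$ on a compact set excluding a neighborhood of that class, and separately confine $\hat L_n$ to a fixed compact set), but the execution differs in two genuine ways. First, your uniform strong law on compacta is obtained directly from the identity $\hat\Phi_n(L)-\Phi(L)=\sum_{J\subseteq[N]}(\hat p_J-p^\star_J)\log\det(L_J)$, a finite sum with $\log\det(L_J)$ bounded on any compact subset of $\mathcal S^{++}_{[N]}$; this exploits the finiteness of the sample space and replaces the paper's heavier route through Wald's integrability condition, Fatou/upper semicontinuity, and a finite-subcover argument. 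Your version is cleaner and fully rigorous. Second, where the paper simply cites Theorem 14 of Brunel et al.\ to place $\hat L_n$ in the compact set $E_{\alpha,\beta}$ on the event $\mathcal A_n$, you attempt a self-contained tightness argument.

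That tightness step is the one place with a real gap. The claim that for $L\sim tM$, $t\to\infty$, the leading order of $\hat\Phi_n(L)$ is $\bigl(\tfrac1n\sum_i|Z_i|-N\bigr)\log t$ is only correct when \emph{all} eigenvalues of $L$ scale like $t$; for sequences escaping anisotropically (e.g.\ $L=\mathrm{diag}(t,1)$) the coefficient of $\log t$ is a different quantity, and your argument as stated does not cover these directions. The repair is easy and in fact shorter than what you wrote: since every summand of $\hat\Phi_n(L)=\sum_J\hat p_J\log \PP_L(J)$ is nonpositive, one has the single-term bounds $\hat\Phi_n(L)\le \hat p_\emptyset\log\PP_L(\emptyset)=-\hat p_\emptyset\log\det(L+I)\le-\hat p_\emptyset\log(1+\lambda_{\max}(L))$ and $\hat\Phi_n(L)\le\hat p_{[N]}\log\det(L)$ (using $\det(L+I)\ge1$). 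Since $\hat p_\emptyset$ and $\hat p_{[N]}$ converge a.s.\ to positive limits, both bounds are eventually incompatible with $\hat\Phi_n(\hat L_n)\ge\hat\Phi_n(L^\star)\to\Phi(L^\star)>-\infty$, giving $c I\preceq\hat L_n\preceq RI$ for large $n$ almost surely. With that substitution your proof closes correctly and is, if anything, more self-contained than the paper's.
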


\begin{proof} The proof is to combine   the convergence in probability result of \cite[Theorem 14]{brunel2017maximum} with  Wald's consistency theorem \cite{wald1949note} and is divided into the following 
	five  steps.  

\noindent {\bf Step 1}:   For $0<\alpha < \beta< 1$, define a set $E_{\alpha,\beta}$
$$E_{\alpha,\beta} = \big\{ L \in \mathcal{S}^{++}_{[N]} : K=L(I+L)^{-1} \in \mathcal{S}^{[\alpha,\beta]}_{[N]} \big\}, $$
where $\mathcal{S}^{[\alpha,\beta]}_{[N]}$ is the set of all N by N symmetric matrices whose eigenvalues are between $\alpha$ and $\beta$. Observe that $E_{\alpha,\beta}$ is compact since it's bounded and closed in $\mathbb{R}^{N \times N}$.  We first show that $\ell(\hat{L}_{n}, L^\star)$ converges to zero almost surely when parameters of matrices are restricted  to  $L^\star \in E_{\alpha,\beta}$ 
 for  appropriate $0<\alpha< \beta <1$.    
%
In order to complete this proof, we let 
$$ 
\Delta \hat{\Phi}(L) = \hat{\Phi}(L) - \hat{\Phi}(L^{\star}) = \frac{1}{n}\sum_{i=1}^{n} \log\frac{P_{L}(Z_i)}{P_{L^\star}(Z_i)}$$
and
$$
\Delta {\Phi}(L) = {\Phi}(L) - {\Phi}(L^{\star})
=\E_{L^\star} \big(\log\frac{P_{L}(Z)}{P_{L^\star}(Z)} \big).$$

$ \Delta {\Phi}(L)$ is the Kullback-Leibler divergence between DPP($L^\star$) and DPP($L$). By Jensen's inequality, $ \Delta {\Phi}(L) \leq 0$ for all $L$ and  $ {\Phi}(L) =  {\Phi}(L^{\star}) $ if and only if $P_{L}(Z) = P_{L^\star}(Z) $ for all $Z\in[N]$, which means $L = DL^\star D$ for some $D \in \mathcal{D}$. In the sequel let $E$ denote $E_{L^*}$

For each $L \in E_{\alpha,\beta} $, the strong law of large numbers implies
\begin{equation}
	\Delta {\hat{\Phi}}(L) \xrightarrow[]{a.s.}\Delta {\Phi}(L). \label{e.3.9} 
\end{equation} 

\noindent {\bf Step 2}: However, the above convergence
\eqref{e.3.9}  doesn't imply the convergence of maximum likelihood estimator to the true values. Thus  the Wald's integrability condition is needed:
for every $ L \in E_{\alpha, \beta}$, there exists $\epsilon > 0$ such that,
\begin{equation}
    \E \sup_{\substack{N\in E_{\alpha, \beta}\\ \ell(L,N) < \epsilon}} \log \frac{P_{N}(Z)}{P_{L^{\star}}(Z)} < \infty \label{W}.
\end{equation}
Since $L \mapsto \log \frac{P_{L}(Z)}{P_{L^{\star}}(Z)} $ is continuous (the determinant function is continuous),  for any   $\delta >0$ there exists $\epsilon >0$, when $\ell(L,N)
 < \epsilon$ $$(1-\delta)\frac{P_{L}(Z)}{P_{L^{\star}}(Z)}< \frac{P_{N}(Z)}{P_{L^{\star}}(Z)} <(1+\delta) \frac{P_{L}(Z)}{P_{L^{\star}}(Z)}.$$
Then the Wald's integrability condition is satisfied.
Now for every sequence $\{L_{n}\}$ converging to L, we show that $ \Delta\Phi(L_{n})$ is upper semicontinuous:
\begin{eqnarray}
    \limsup_{n\to \infty} \Delta\Phi(L_{n}) &=& \limsup_{n\to \infty} \E \log\frac{P_{L_{n}}(Z)}{P_{L^\star}(Z)} \nonumber \\
    &\leq& \E\limsup_{n\to \infty} \log \frac{P_{L_{n}}(Z)}{P_{L^\star}(Z)} \nonumber \\
    &=&\E \log \frac{P_{L}(Z)}{P_{L^\star}(Z)} \nonumber \\
    &=& \Delta \Phi(L) \nonumber.
\end{eqnarray} 
The second inequality follows from the Fatou's lemma and the third identity is the consequence of continuity of the function $\log\frac{P_{L_{n}}(Z)}{P_{L^\star}(Z)}$.
For every $\eta > 0$ we define the set $K_{\eta}$
\begin{align}
     K_{\eta} &= \big\{ L \in E_{\alpha,\beta}: \ell(L,L^\star) \geq \eta \big\} \nonumber \\
    ~&= \bigcap_{D\in\mathcal{D}} \big\{L \in E_{\alpha,\beta}: \Vert L - DL^{\star}D \Vert_{F} \geq \eta \big\},
\end{align}
which is  closed   and hence   compact. 

Since $\Delta \Phi(L)$ is an upper semicontinuous function, it achieves maximum over the compact set $K_{\eta}$
(since $E_{\alpha, \beta}$ is compact). We denote the maximum by $m(\eta)$. And we cannot have $m(\eta) = 0$ because that would imply there is an  $L \in K_{\eta}$ such that $L = DL^\star D$ for some $D \in \mathcal{D}$.
The strong law of large numbers implies
\begin{eqnarray}
    \sup_{\substack{N\in E_{\alpha, \beta}\\ \ell(L,N) < \epsilon}} \Delta \hat{\Phi}(N) &\leq& \frac{1}{n} \sum_{i=1}^{n} \sup_{\substack{N\in E_{\alpha, \beta}\\ \ell(L,N) < \epsilon}} \log \frac{P_{N}(Z_{i})}{P_{L^\star}(Z_{i})} \nonumber \\
    &\xrightarrow[]{a.s.}& \E \sup_{\substack{N\in E_{\alpha, \beta}\\ \ell(L,N) < \epsilon}} \log \frac{P_{N}(Z)}{P_{L^\star}(Z)}.
\end{eqnarray}

By continuity,
$$\lim_{\epsilon \to 0}\sup_{\substack{N\in E_{\alpha, \beta}\\ \ell(L,N) < \epsilon}} \log \frac{P_{N}(Z))}{P_{L^\star}(Z)} = \log\frac{P_{L}(Z)}{P_{L^\star}(Z)} $$  
and $\sup_{\epsilon} \log\frac{P_{N}}{P_{L^\star}}$
is a decreasing function with respect to $\epsilon$ because supremum over a smaller subset is smaller than that  over a bigger subset. And by \eqref{W} it is integrable for all small enough $\epsilon$. Hence by  the dominated convergence theorem,
\begin{equation}
    \lim_{\epsilon \to 0} \E\sup_{\substack{N\in E_{\alpha, \beta}\\ \ell(L,N) < \epsilon}} \log \frac{P_{N}(Z))}{P_{L^\star}(Z)} = \E \log\frac{P_{L}(Z)}{P_{L^\star}(Z)} = \Delta \Phi(L) \nonumber.
\end{equation}
Thus for any $L \in K_{\eta}$ and any $\gamma > 0$ there exists an \ $\epsilon_{L}$ such that 
\begin{equation}
    \E\sup_{\substack{N\in E_{\alpha, \beta}\\ \ell(L,N) < \epsilon_{L}}} \log \frac{P_{N}(Z)}{P_{L^\star}(Z)} < m(\eta)+\gamma \label{gamma} .
\end{equation}

\noindent{\bf Step 3}.  For each $L \in K_{\eta}$, we define the open set:
$$V_{L} = \{N\in E_{\alpha, \beta}: \ell(N,L) < \epsilon_{L}\}$$
and then the family $\{V_{L}: L \in K_{\eta}\}$ is an open cover of $K_{\eta}$ and hence has a finite subcover: $V_{L_{1}}, V_{L_{2}}, .... , V_{L_{d}}$.
On every $V_{L_{i}}$ we use strong law of large numbers again to obtain 
\begin{eqnarray}
    \limsup_{n \to \infty} \sup_{N\in V_{L_{i}}} \Delta\hat{\Phi}(N) &\leq& \limsup_{n \to \infty} \frac{1}{n}\sum_{i=1}^{n}\sup_{N\in V_{L_{i}}}\log\frac{P_{N}(Z_{i})}{P_{L^\star}(Z_{i})} \nonumber \\
    &=& \E \sup_{N\in V_{L_{i}}}\log\frac{P_{N}(Z)}{P_{L^\star}(Z)}.
\end{eqnarray}
From \eqref{gamma} we get
$$    \limsup_{n \to \infty} \sup_{N\in V_{L_{i}}} \Delta\hat{\Phi}(N) < m(\eta) +\gamma \qquad i = 1,2,...,d.$$
Since $\{V_{L_{i}}: i=1,2 ...,d \}$ cover $K_{\eta}$ we have
$$    \limsup_{n \to \infty} \sup_{N\in K_{\eta} } \Delta\hat{\Phi}(N) < m(\eta) +\gamma $$
which, since $\gamma$ is arbitrary, implies
\begin{equation}
\limsup_{n \to \infty} \sup_{L \in K_{\eta} } \Delta \hat{\Phi}(L) < \sup_{L \in K_{\eta}} \Delta {\Phi}(L) = m(\eta) \label{sup}.
\end{equation}
Notice that $m(\eta) < 0$. From \eqref{sup} there exists a constant $N_{1}$ such that
$$\sup_{L \in K_{\eta} } \Delta \hat{\Phi}(L) < \frac{m(\eta)}{2}, \qquad n > N_{1}.$$
But $$ \Delta \hat{\Phi}(\hat{L}_{n}) = \sup_{L \in E_{\alpha,\beta}}  \Delta \hat{\Phi}({L}) \geq \Delta \hat{\Phi}(L^\star) \xrightarrow[]{a.s.} \Delta{\Phi}(L^\star) =0 , $$
so there exists a constant $N_{2}$ such that
$$\Delta \hat{\Phi}(\hat{L}_{n}) \geq \frac{m(\eta)}{2}, \qquad n>N_{2}$$
which implies that $\hat{L}_{n} \notin K_{\eta}$, that is,
$\ell(\hat{L}_{n}, L ) < \epsilon $.

\noindent{\bf Step 4}. Now we can remove the compactness condition. 
 First  we  show that the event $\{\hat{L}_{n} \in E_{\alpha,\beta}\}$ holds almost surely. We adopt the proof from \cite{brunel2017maximum}.
	Let $\delta = \min_{J\subset [N]}P_{L^\star}{(J)}$. For simplicity, we denote $P_{L^\star}{(J)}$ by $p^{\star}_{J}$.  Since $L^\star$ is positive definite, $\delta >0$. Define the event $\mathcal{A}_{n}$ by
	
	$$\mathcal{A}_{n} = \bigcap_{J \subset[N]} \big\{ p^{\star}_{J} \leq 2\hat{p}_{J}\leq 3p^{\star}_{J}\big\}.$$
	Observe that $\Phi(L^\star)<0$ and  we can find $\alpha < \exp(3\Phi(L^\star)/\delta)$ and $\beta >1- \exp(3\Phi(L^\star)/\delta)$ such that $0<\alpha<\beta<1$.
	Then using   \cite[Theorem 14]{brunel2017maximum} we know that on the event $\mathcal{A}_{n}$, $\hat{L}_{n} \in E_{\alpha, \beta}$, that is,
	$$P(\hat{L}_{n} \in E_{\alpha, \beta}) \geq P(\mathcal{A}_{n}).$$
	Because $$\hat{p}_{J} = \frac{1}{n}\sum_{i=1}^{n}\mathbb{I}(Z_{i}=J) \xrightarrow[]{a.s.}P_{L^\star}(Z=J) = p^{\star}_{J}\,, $$ 
	the event $\mathcal{A}_{n}$ holds almost surely when $n$ goes to infinity and hence $\{\hat{L}_{n} \in E_{\alpha,\beta}\}$ holds almost surely. 
	
\noindent{\bf Step 5}.  Let $\mathbb{I}_{E_{n}}$ denote the indicator function of the event $ \{ \hat{L}_{n} \in E_{\alpha, \beta} \} $.  Then
\begin{eqnarray}
    \bbP\big(\lim_{n \to \infty} \ell(\hat{L}_{n},L^{\star}) = 0 \big)&=& \bbP\big(\lim_{n \to \infty} \ell(\hat{L}_{n} ,L^{\star}) = 0 , \lim_{n \to \infty} \mathbb{I}_{E_{n}} =1 \big) \nonumber \\
    &\,& +\bbP\big(\lim_{n \to \infty} \ell(\hat{L}_{n} ,L^{\star}) = 0 , \lim_{n \to \infty} \mathbb{I}_{E_{n}} \neq 1\big )\nonumber \\
    &=& \bbP \big(\lim_{n \to \infty} \ell(\hat{L}_{n} ,L^{\star}) = 0 , \lim_{n \to \infty} \mathbb{I}_{E_{n}} =1 \big)   \nonumber\\
    &=& \bbP\big(\lim_{n \to \infty} \ell(\hat{L}_{n} ,L^{\star}) = 0  \big| \lim_{n \to \infty} \mathbb{I}_{E_{n}} =1 \big)\bbP\big( \lim_{n \to \infty} \mathbb{I}_{E_{n}} = 1\big)  \nonumber \\
    &=&\bbP\big(\lim_{n \to \infty} \ell(\hat{L}_{n} ,L^{\star}) = 0  \big| \lim_{n \to \infty} \mathbb{I}_{E_{n}} =1 \big)\nonumber \\
    &=& 1 \nonumber.
\end{eqnarray}
The last equality follows from the fact that $\hat{L}_{n} \in E_{\alpha,\beta} $ almost surely proved in Step 4.
\end{proof}

\subsection{Berry-Esseen theorem}
An $N$ by $N$ matrix $[A_{ij}]_{N\times N}$ can also be viewed as an $N \times N$ dimensional column vector: $ (A_{11}, A_{12}, ..., A_{1N}, A_{21}, ..., A_{N1}, ... A_{NN})^{T}$. With this identification the  Frobenius norm of the matrix $||\cdot||_{F}$ is just the $\mathcal{L}^{2}$ norm for its corresponding column vector. In the following   we  shall regard the matrix as the corresponding column vector.

 Because of non-identifiability of DPPs,  the maximum likelihood estimators are not unique. We choose the estimator $\Tilde{L}_n$ which is closest to the  given   true value $L^*$. In fact, let ${\hat{L}_n}$ be one maximal likelihood estimator. Let $\hat{D}_n \in \mathcal{D} $ be such that  
\begin{equation} \label{theMLE}
    \lVert\hat{D}_n\hat{L}_n\hat{D}_n - L^{\star} \rVert_{F} = \min_{{{D} \in \mathcal{D}}} 
    \lVert {D} \hat{L}_n{D}   - L^{\star} \rVert_F
\end{equation}
and set 
\begin{equation}
	\Tilde{L}_n= \hat{D}_n\hat{L}_n\hat{D}_n\,. \label{e.3.16a}
\end{equation}  Then the strong consistency of $\Tilde{L}_n$ immediately follows from   Theorem \ref{strongconsistency}.

We impose irreducibility assumption introduced in \cite{brunel2017maximum} on $L^{\star}$, of which the assumption is equivalent to $L^{\star}$ being a one block matrix, that is, it cannot be partitioned into two block matrices. And then according to   \cite[Theorem 8]{brunel2017maximum}, $\mathrm{d}^{2} {\Phi}(L^{\star})$ is negative definite and hence invertible. Let $V(L^{\star})$ denote its inverse:
\begin{equation}
V(L^{\star}):=\left[\mathrm{d}^{2} {\Phi}(L^{\star})\right]^{-1}\,. \label{e.3.17}
\end{equation}
Here if we vectorize ${L}$ then $\mathrm{d}^{2} {\Phi}(L^{\star})$ is an $(N \times N) \times (N \times N)$ Hessian matrix. By \cite[Theorem 5.41]{van2000asymptotic}, 
\begin{eqnarray}
    \sqrt{n}(\Tilde{L}_n   - L^{\star})
    &=& -({\E{(\mathrm{d}^{2}\log {P_{L^{\star}}(Z)})}})^{-1} \frac{1}{\sqrt{n}} \sum_{i=1}^{n} \mathrm{d}( \log P_{L^{\star}}(Z_{i})) + o_{P}(1)\nonumber \\
    ~&=& -V(L^{\star})\frac{1}{\sqrt{n}} \sum_{i=1}^{n}((L^{\star}_{Z_{i}})^{-1} - (I +L^{\star})^{-1}) + o_{P}(1).\label{5.41} 
\end{eqnarray}
In particular, \cite[Theorem 5.41]{van2000asymptotic}  states that the sequence $ \sqrt{n}(\Tilde{L}_n  - L^{\star})$ is asymptotically normal with mean $\bm{0}$ and covariance matrix $ -V(L^{\star})$. Hence we get the following theorem for the asymptotic normality of the maximum estimator $\Tilde{L}_n$.
\begin{theorem} \label{CLT}
Let $L^{\star}$ be irreducible. Then, $\Tilde{L}_n$ is asymptotically normal:
\begin{equation}
    \sqrt{n}(\Tilde{L}_n - L^{\star})  \stackrel{d}{\rightarrow}   \mathcal{N} (\bm{0},-V(L^{\star})), \quad \hbox{  as \ $n \longrightarrow \infty$} \label{e.3.15} 
\end{equation}
where $V(L^{\star})$ is given by \eqref{e.3.17} and   the above convergence $\stackrel{d}{\rightarrow}$  holds in distribution.
\end{theorem}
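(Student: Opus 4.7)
The plan is to verify the hypotheses of the classical M-estimator central limit theorem, namely \cite[Theorem 5.41]{van2000asymptotic}, which directly supplies the stochastic expansion already displayed in \eqref{5.41}; the claimed distributional limit then follows from the i.i.d.\ multivariate CLT applied to the score, combined with the information identity and Slutsky's lemma.

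To invoke Theorem 5.41 three ingredients must be checked. First, smoothness: by Lemma \ref{mainlemma} the log-likelihood is $C^1$ with score $\diff \log P_L(Z) = L_Z^{-1} - (L+I)^{-1}$, where the first term is understood as the natural embedding into an $N \times N$ matrix with zeros outside the block $Z \times Z$. A second differentiation (using $\diff[(L+I)^{-1}](H) = -(L+I)^{-1}H(L+I)^{-1}$ and the analogous identity applied to each $L_J^{-1}$) shows that the Hessian $\mathrm{d}^2 \log P_L(Z)$ is continuous in $L$ on $\SSN$. Since the ground set $\YY = 2^{[N]}$ is finite, every expectation is a finite sum, and any continuous function of $L$ automatically admits an integrable envelope on a compact neighborhood of $L^\star$; all moment and domination conditions of Theorem 5.41 are therefore immediate. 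Second, invertibility of $\mathrm{d}^2 \Phi(L^\star)$: this is exactly the content of the irreducibility assumption via \cite[Theorem 8]{brunel2017maximum}, which gives $\mathrm{d}^2 \Phi(L^\star) \prec 0$ and hence existence of its inverse $V(L^\star)$ defined in \eqref{e.3.17}. Third, consistency of $\Tilde{L}_n$: furnished by Theorem \ref{strongconsistency} combined with the selection rule \eqref{theMLE}.

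With these ingredients Theorem 5.41 produces the expansion
\begin{equation*}
  \sqrt{n}(\Tilde L_n - L^\star) = -V(L^\star)\cdot \frac{1}{\sqrt{n}} \sum_{i=1}^n \diff \log P_{L^\star}(Z_i) + o_P(1),
\end{equation*}
which is precisely \eqref{5.41}. The summands on the right are i.i.d.\ with mean zero (because $\diff \Phi(L^\star) = 0$ and the identity survives the finite sum) and with finite covariance matrix $\Sigma$. Differentiating the identity $\E[\diff \log P_L(Z)] = 0$ at $L = L^\star$ under the sum gives the information identity $\Sigma = -\mathrm{d}^2 \Phi(L^\star) = -V(L^\star)^{-1}$. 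The multivariate CLT and Slutsky's lemma then yield the limiting covariance $V(L^\star)\Sigma V(L^\star) = -V(L^\star)$, matching \eqref{e.3.15}.

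The principal subtlety, and the only step where real care is needed, is the DPP non-identifiability: the likelihood has $|\mathcal{D}|$ global maximizers $\{DL^\star D : D \in \mathcal{D}\}$, so the raw MLE $\hat L_n$ need not converge to $L^\star$ itself. The selection rule \eqref{theMLE} resolves this. Because $\mathcal{D}$ is finite and, under irreducibility, the points of the orbit $\{DL^\star D : D \in \mathcal{D}\}$ are pairwise distinct, on the high-probability event where $\hat L_n$ lies sufficiently close to this orbit the minimizer $\hat D_n$ is uniquely determined and $\Tilde L_n$ is a genuine local maximizer of $\hat \Phi_n$ in a fixed neighborhood of $L^\star$. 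In particular the first-order condition $\diff \hat \Phi_n(\Tilde L_n) = 0$ used in the Taylor-expansion step of Theorem 5.41 holds on this event, which has probability tending to one by Theorem \ref{strongconsistency}. The rest of the argument is then routine.
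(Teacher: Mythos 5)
Your proposal is correct and takes essentially the same route as the paper: both rest on \cite[Theorem 5.41]{van2000asymptotic} for the expansion \eqref{5.41}, the invertibility of $\mathrm{d}^{2}\Phi(L^{\star})$ supplied by irreducibility via \cite[Theorem 8]{brunel2017maximum}, and the consistency of $\Tilde{L}_n$ from Theorem \ref{strongconsistency}, with the information identity giving the limiting covariance $-V(L^{\star})$. If anything, your write-up is more complete than the paper's, which asserts the conclusion directly from the cited theorem without explicitly verifying the regularity hypotheses or discussing how the selection rule \eqref{theMLE} resolves the non-identifiability.
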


Having established the  asymptotic normality of the maximum estimator $\Tilde{L}_n$,    we want to  take one step further. 
Namely, 
we want to find  an upper error bound on the rate of convergence of the distribution of  $ (-V(L^{\star}))^{-\frac{1}{2}} \sqrt{n}(\Tilde{L}_n - L^{\star}) $ to standard multidimensional normal distribution $Z \sim \mathcal{N}(\bm{0},{I})$. In other words, we are going to obtain a Berry-Esseen type theorem.   We argue that when $\TL \in E_{\alpha, \beta} $, the bound of the maximal error is of order $n^{-\frac{1}{4}}$. This restriction  is not a big deal. Indeed, since $\alpha$ and $\beta$ can be chosen arbitrarily   close to $0$ and $1$ respectively so that  $E_{\alpha,\beta}$ converges to $\SSN$. What's more, since from Theorem \ref{strongconsistency}, $\hat{L} \in E_{\alpha,\beta}$ almost surely,  $ \hat{D}\hat{L}\hat{D} = \TL \in E_{\alpha,\beta}$ almost surely. 
\begin{theorem} \label{BETT}
    Let the maximum likelihood estimator $\Tilde{L}_n$ be   defined by \eqref{e.3.16a}  and also belong to $E_{\alpha, \beta}$ for some $0<\alpha<\beta<1$ and let Z be an $N \times N$ standard Gaussian matrix. Then for every $x \in \mathbb{R}^{N \times N}$,
    $$ \vert \bbP((-V(L^{\star}))^{-\frac{1}{2}}\sqrt{n} (\TL - L^{\star}) < x) - \bbP(Z <x ) \vert \leq C \frac{1}{\sqrt[4]{n}},
    $$
where C is a sufficiently  large constant, which is irrelevant to $x$, subject to $\alpha, \beta$ and proportional to $N^2$. Here for two 
$N$ by $N$ matrices $A$ and $B$, $A< B$  means the   components of $A$ is less than the corresponding components of $B$. 
\end{theorem}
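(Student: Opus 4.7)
The plan is to combine the asymptotic expansion \eqref{5.41}, a multivariate Berry--Esseen bound for its leading term, and a smoothing (anti-concentration) argument to absorb the remainder. Introduce
$$
W_n := \frac{1}{\sqrt n}\sum_{i=1}^n \xi_i,\qquad \xi_i := (L^\star_{Z_i})^{-1}-(I+L^\star)^{-1},
$$
so that \eqref{5.41} reads $\sqrt n(\tilde L_n-L^\star) = -V(L^\star)W_n + R_n$ for a remainder $R_n = o_P(1)$. The Fisher information identity, applied to $\log P_{L^\star}(Z)$, gives $\E\xi_i = \mathbf 0$ and $\mathrm{Cov}(\xi_i) = -V(L^\star)^{-1}$, so after vectorising to $\mathbb R^{N^2}$ the quantity $A_n := -(-V(L^\star))^{1/2}W_n$ is a centred i.i.d.\ sum with identity covariance, and
$$
(-V(L^\star))^{-1/2}\sqrt n(\tilde L_n-L^\star) = A_n + E_n,\qquad E_n := (-V(L^\star))^{-1/2}R_n.
$$

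I would first apply a multivariate Berry--Esseen theorem over orthants to $A_n$. Because $\tilde L_n, L^\star \in E_{\alpha,\beta}$, every submatrix $L^\star_{Z_i}$ has eigenvalues in a fixed compact subset of $(0,\infty)$, so $\E\|\xi_i\|_F^3$ is finite and controlled uniformly in terms of $\alpha,\beta,N$. A multivariate Berry--Esseen estimate then yields
$$
\sup_{x\in\mathbb R^{N^2}}\bigl|\PP(A_n<x) - \PP(Z<x)\bigr| \le \frac{C_1 N^2}{\sqrt n}.
$$

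The main obstacle is obtaining a quantitative rate on the remainder $R_n$, not merely the qualitative $o_P(1)$ provided by van der Vaart's argument. Starting from the exact first-order condition $\mathrm{d}\hat\Phi_n(\tilde L_n)=0$, I would Taylor-expand $\mathrm{d}\hat\Phi_n$ to second order around $L^\star$, invert the empirical Hessian $\mathrm{d}^2\hat\Phi_n(L^\star)$, and replace it by its limit $\mathrm{d}^2\Phi(L^\star) = V(L^\star)^{-1}$. Combined with the CLT rate $\|\tilde L_n - L^\star\|_F = O_P(n^{-1/2})$ from Theorem \ref{CLT} and uniform bounds on all first, second, and third derivatives of $\log P_L(Z)$ over the compact set $E_{\alpha,\beta}$ (which is precisely why the theorem restricts $\tilde L_n$ to $E_{\alpha,\beta}$, since otherwise $L_J^{-1}$ and $(L+I)^{-1}$ would blow up), one obtains $\E\|R_n\|_F = O(n^{-1/2})$, hence $\E\|E_n\|_\infty = O(n^{-1/2})$.

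The final step is a standard smoothing argument. For every $\varepsilon>0$ and $x\in\mathbb R^{N^2}$,
$$
\PP(A_n+E_n<x) \le \PP(A_n<x+\varepsilon\mathbf 1) + \PP(\|E_n\|_\infty>\varepsilon),
$$
together with the symmetric lower bound. The Berry--Esseen estimate replaces $\PP(A_n<x+\varepsilon\mathbf 1)$ by $\PP(Z<x+\varepsilon\mathbf 1)$ at cost $C_1 N^2/\sqrt n$; the Gaussian anti-concentration bound $\PP(Z\in[x,x+\varepsilon\mathbf 1])\le N^2\varepsilon/\sqrt{2\pi}$ (obtained by summing over the $N^2$ coordinate slabs of thickness $\varepsilon$) controls the shell; and Markov's inequality with the $L^1$-rate on $R_n$ gives $\PP(\|E_n\|_\infty>\varepsilon)\le C_2/(\varepsilon\sqrt n)$. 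Choosing $\varepsilon = n^{-1/4}$ balances the $N^2\varepsilon$ and $1/(\varepsilon\sqrt n)$ contributions, absorbs the lower-order $N^2/\sqrt n$ Berry--Esseen term, and delivers the claimed bound $C\,n^{-1/4}$ with $C$ proportional to $N^2$ and depending only on $\alpha,\beta$.
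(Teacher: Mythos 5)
Your proposal follows essentially the same route as the paper's proof: decompose $(-V(L^\star))^{-1/2}\sqrt n(\tilde L_n-L^\star)$ into the normalized i.i.d.\ score sum plus a remainder, bound the remainder in $L^1$ by $O(n^{-1/2})$ using the Taylor expansion behind van der Vaart's Theorem 5.41 together with compactness of $E_{\alpha,\beta}$, apply a multivariate Berry--Esseen bound to the leading term, control the shell by Gaussian anti-concentration over the $N^2$ coordinate slabs, and balance with Markov's inequality at the truncation level $n^{-1/4}$. The argument is correct and matches the paper's Steps 1--4 in both structure and the source of each constant.
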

\begin{proof} We divide the proof into four steps.
	
\noindent{\bf Step 1}. \ 	
According to \eqref{5.41}, $ (-V(L^{\star}))^{-\frac{1}{2}} \sqrt{n}(\Tilde{L}_n - L^{\star}) $ can be decomposed into a sum 
\begin{equation} \label{decompose}
X_{n} =\sum_{i=1}^{n} \xi_{i} := (-V(L^{\star}))^{\frac{1}{2}}\frac{1}{\sqrt{n}} \sum_{i=1}^{n}((L^{\star}_{Z_{i}})^{-1} - (I +L^{\star})^{-1}) 
\end{equation}
and a term $\rho_{n}=(-V(L^{\star}))^{-\frac{1}{2}}o_{P}(1) $ whose Frobenius norm converges to zero in probability.  
\begin{align}
    &\vert \bbP(X_{n} + \rho_{n} < {x})  - \bbP(Z < {x}) \vert \nonumber \\
    &= \vert \bbP(X_{n} + \rho_{n} < {x}, \Vert\rho_{n} \Vert_{F} \geq k_{n})+\bbP(X_{n} + \rho_{n} < {x}, \Vert\rho_{n} \Vert_{F} < k_{n}) 
    - \bbP(Z < {x}) \vert \nonumber \\
    &\leq \bbP(\Vert \rho_{n}\Vert_{F} \geq k_{n}) 
    + \vert \bbP(X_{n} + \rho_{n} < {x}, \Vert\rho_{n} \Vert_{F} < k_{n}) - \bbP(Z < {x})\vert \nonumber \\
    &\leq \bbP(\Vert \rho_{n}\Vert_{F} \geq k_{n}) \nonumber\\ 
    &\qquad+  \vert \bbP(X_{n} + {k_{n}}{\mathbbm{1}} < x, \Vert\rho_{n} \Vert_{F} < k_{n}) - \bbP(Z < {x})\vert \nonumber \\ 
    &\qquad+ \vert \bbP(X_{n} - {k_{n}}{\mathbbm{1}} < x, \Vert\rho_{n} \Vert_{F} < k_{n}) - \bbP(Z < {x})\vert \nonumber
   \nonumber \\
    &=: I_1+I_2+I_3\,, 
\end{align}
where $\{k_{n}\}$ is an arbitrary sequence of positive real number and $\mathbbm{1}$ is the $N \times N$ matrix whose entries are all 1.  

\noindent{\bf Step 2}. The Estimation of  ($I_1$).  We claim 
  $
    \bbP(\Vert \rho_{n}\Vert \geq k_{n}) \leq  \frac{C_{4}}{\sqrt[4]{n}},
    $
where $k_{n} = n^{-\frac{1}{4}}$ and $C_{4}$ is a constant. 

In fact, 
from the proof of   \cite[Theorem 5.41]{van2000asymptotic}, $\rho_{n}$ has the following expression
\begin{align}
    \rho_{n} = \sqrt{n} (-V(L^{\star}))^{\frac{1}{2}}\bigg( &\di^2 \hat{\Phi}_{n}(L^{\star}) - \E(\di^2 \hat{\Phi}_{n}(L^{\star})) \notag \\
    &+ \frac{1}{2}(\Tilde{L}_n-L^{\star})^{T} \di^{3}\hat{\Phi}_{n}(L_{n})\bigg)(\Tilde{L}_n-L^{\star}),
\end{align}
where $L_{n}$ is a point on the line segment between $\Tilde{L}_n$ and $L^{\star}$. To simplify notation, let $\theta$ denote  $$\bigg( \di^2 \hat{\Phi}_{n}(L^{\star}) - \E(\di^2 \hat{\Phi}_{n}(L^{\star})) + \frac{1}{2}(\Tilde{L}_n-L^{\star})^{T} \di^{3}\hat{\Phi}_{n}(L_{n})\bigg)(\Tilde{L}_n-L^{\star}).$$
Then
\begin{eqnarray}
    \E{\Vert \rho_{n} \Vert_{F}} &=& \E\Vert\sqrt{n}(-V(L^{\star}))^{\frac{1}{2}}\theta\Vert_{F}\nonumber \\
    &=& \sqrt{n} \E\Vert(-V(L^{\star}))^{\frac{1}{2}}\theta\Vert_{F} \nonumber \\
    &\leq& \sqrt{n} \E\Vert (-V(L^{\star}))^{\frac{1}{2}}\Vert_{op}\Vert\theta \Vert_{2} \nonumber \\
    &=& \sqrt{n \cdot \Lambda_{max}(-V)} \cdot  \E \Vert \theta \Vert_{2}.
\end{eqnarray}
$\Vert \cdot \Vert_{op}$ denotes the operator norm induced by $\mathcal{L}^{2} $ norm and $\Lambda_{max}$ denotes the largest eigenvalue. For the first inequality, we regard $\theta$ as an $N \times N$ column vector and $(-V(L^{\star}))^{\frac{1}{2}}$ is an $(N \times N) \times (N \times N)$ matrix.
\begin{align}
    \E \Vert \phi \Vert_{2} 
    =& \E \big\Vert\big( \di^2 \hat{\Phi}_{n}(L^{\star}) - \E(\di^2 \hat{\Phi}_{n}(L^{\star})) + \frac{1}{2}(\Tilde{L}_n-L^{\star})^{T} \di^{3}\hat{\Phi}_{n}(L_{n})\big)(\Tilde{L}_n-L^{\star})\big\Vert_{2}\nonumber  \\
    \leq& \E \big\Vert\big( \di^2 \hat{\Phi}_{n}(L^{\star}) - \E(\di^2 \hat{\Phi}_{n}(L^{\star}))\big)(\Tilde{L}_n-L^{\star})\big\Vert_{2}  
    \label{rho1}  \\ 
    +& \E \Vert \frac{1}{2}(\Tilde{L}_n-L^{\star})^{T} \di^{3}\hat{\Phi}_{n}(L_{n})(\Tilde{L}_n-L^{\star})\big\Vert_{2}\,.  
    \label{rho2}
\end{align}
Using Cauchy-Schwartz inequality to estimate \eqref{rho1} we see 
\begin{eqnarray} 
     I_1-1  &\leq& \E^{\frac{1}{2}} \big\Vert \di^2 \hat{\Phi}_{n}(L^{\star}) - \E(\di^2 \hat{\Phi}_{n}(L^{\star}))\big\Vert^{2}_{op} \E^{\frac{1}{2}} \Vert \Tilde{L}_n-L^{\star}\Vert^{2}_{2} \nonumber \\
    &\leq& \frac{N^{2}}{\sqrt{n}}\max_{i,j}({L^{\star}}^{-1})_{ij}^{2} \E^{\frac{1}{2}} \Vert \Tilde{L}_n-L^{\star}\Vert^{2}_{2}.
\end{eqnarray} 
Let $h(x)$ be a multivariate function:
\begin{align*}
  h: \quad  \mathbb{R}^{N \times N} \longrightarrow{} & \, \mathbb{R} \\
    (x_{1}, x_{2},..., x_{NN}) \longmapsto& \, x_{1}^{2} + x_{2}^{2} + \cdots + x_{NN}^{2}
\end{align*}
Then $h$ is a continuous function. What's more almost surely $\TL \in E_{\alpha,\beta}$, which is a compact and convex set. Using Theorem \ref{CLT} and portmanteau lemma we have
\begin{equation}
    \E \big(h(\sqrt{n}(\TL -L^{\star}))\big)= n \E\Vert \TL - L^{\star} \Vert^{2}_{F} \longrightarrow \E \Vert \Tilde{Z} \Vert^{2}_{F} ,
\end{equation}
where $\Tilde{Z} \sim \mathcal{N}(\bm{0}, -V(L^\star))$. $\E \Vert \Tilde{Z} \Vert^{2}_{F}$ is equal to $\E(\Tilde{Z}_{11}^2 + \cdots + \Tilde{Z}_{1n}^2 + \Tilde{Z}_{21}^2 + \cdots + \Tilde{Z}_{nn}^2) = \mathrm{Tr}(-V(L^{\star}))$. Then there exists a constant $C_{1}$ subject to $\alpha, \beta$ such that
\begin{equation}
    \E^{\frac{1}{2}} \Vert \Tilde{L}_n-L^{\star}\Vert^{2}_{2} \leq C_{1}\frac{1}{\sqrt{n}}.
\end{equation}
As a result,
\begin{equation}
    \text{\eqref{rho1}} \leq C_{2} N^{2} \frac{1}{n}\,, 
\end{equation}
where $C_{2}$ is a suitable constant.

Next, we estimate the second part, that is \eqref{rho2}:
$$ \E \Vert \frac{1}{2}(\Tilde{L}_n-L^{\star})^{T} \di^{3}\hat{\Phi}_{n}(L_{n})\big)(\Tilde{L}_n-L^{\star})\big\Vert_{2}.$$
Here $\di^{3}\hat{\Phi}_{n}(L_{n})$ is an $N \times N$ dimensional column vector whose entries are $N \times N$ matrices. Since $\LF$ is infinitely many time differentiable, $L_{n}$ is on the line segment between $\Tilde{L}_n$ and $L^{\star}$, and $E_{\alpha,\beta}$ is a convex and compact set, we conclude that every entry of $\di^{3}\hat{\Phi}_{n}(L_{n})$ is bounded. Hence there exists a constant $C_{3} \geq 0$ subject to $\alpha$ and $\beta$ such that
\begin{align}
    \E \Vert \frac{1}{2}(\Tilde{L}_n-L^{\star})^{T} \di^{3}\hat{\Phi}_{n}(L_{n})\big)(\Tilde{L}_n-L^{\star})\big\Vert_{2} \leq& C_{3} \E \Vert \Tilde{L}_n-L^{\star}\Vert^{2}_{2} \nonumber \\
     \leq& \frac{C_{1}^{2}C_{3}}{n}.
\end{align}
Now let $k_{n} = n^{-\frac{1}{4}}$. Using Chebyshev's inequality we get:
\begin{equation} 
    \bbP(\Vert \rho_{n}\Vert_{F} \geq k_{n}) \leq \frac{\E \Vert \rho_{n} \Vert_{F}}{k_{n}} = \frac{C_{4}}{\sqrt[4]{n}} \label{e.3.32}
\end{equation}
for a suitable constant $C_{4}$.

\noindent{\bf Step 3}. 
Our next goal is to estimate  ($I_2$) as follows. 
Let $k_{n}$ be $\frac{1}{\sqrt[4]{n}}$. Then 
  \[
   I_2\leq \frac{C_{7}}{\sqrt[4]{n}}\]
   for some constant $C_{7}$.

Because
\begin{align}
&\bbP(X_{n}+k_{n}\mathbbm{1} <x) -\bbP(Z<x) \nonumber \\
&\geq \bbP(X_{n} + {k_{n}}\mathbbm{1} < x, \Vert\rho_{n} \Vert_{F} < k_{n}) - \bbP(Z < {x}) \nonumber \\
    &=  \big(\bbP(X_{n}+k_{n}\mathbbm{1} < x)-\bbP(X_{n} + {k_{n}}\mathbbm{1} < x, \Vert\rho_{n} \Vert_{F} \geq k_{n})\big) - \bbP(Z < {x}) \nonumber \\
    &\geq \bbP(X_{n}+k_{n}\mathbbm{1} < x)- \bbP(\Vert\rho_{n} \Vert_{F} > k_{n}) - \bbP(Z < {x}) \nonumber,
\end{align}
we have
\begin{align}
  I_2
    &\leq \vert \bbP(X_{n}+k_{n}\mathbbm{1} <x) -\bbP(Z<x)\vert \nonumber \\
    & \quad + \vert \bbP(X_{n}+k_{n}\mathbbm{1} < x)- \bbP(\vert \rho_{n} \Vert_{F} \geq k_{n}) - \bbP(Z < {x})\vert \nonumber \\
    &\leq 2 \vert \bbP(X_{n}+k_{n}\mathbbm{1} <x) -\bbP(Z<x)\vert + \bbP(\Vert \rho_{n} \Vert_{F} \geq k_{n}) \nonumber \\
    &= 2 \vert \bbP(X_{n}+k_{n}\mathbbm{1} <x) -\bbP(Z +k_{n}\mathbbm{1} <x) \nonumber \\
    & \quad + \bbP(Z +k_{n}\mathbbm{1} <x) - \bbP(Z<x)\vert + \bbP(\Vert \rho_{n} \Vert_{F} \geq k_{n}) \nonumber \\
    &\leq 2 \vert \bbP(X_{n}+k_{n}\mathbbm{1} <x) -P(Z +k_{n}\mathbbm{1} <x) \vert 
     \nonumber\\ 
    &\quad +  2\vert \bbP(Z +k_{n}\mathbbm{1} <x)- P(Z<x) \vert 
    +  \bbP(\Vert \rho_{n} \Vert_{F} \geq k_{n}) \nonumber \\ 
   &=I_{21}+I_{22}+I_{23} \label{Chebyshev}.
\end{align}
By multidimensional Berry-Esseen theorem in \cite{bentkus2005lyapunov}, 
\begin{equation}
    I_{21} \leq C_{5} \cdot \sqrt{N} \cdot n \cdot \E{\Vert \xi_{1} \Vert_{2}^{3}}
\end{equation}
where $C_{5}$ is a constant and $\xi_{1}$ is defined in \eqref{decompose}:
\begin{eqnarray}
    \E{\Vert \xi_{1} \Vert}^{3} &=& \E\Vert\frac{1}{\sqrt{n}}(-V(L^{\star}))^{-\frac{1}{2}}\big((L^{\star}_{Z_{i}})^{-1} - (I +L^{\star})^{-1} \big)\Vert_{2}^{3} \nonumber \\
    &\leq& (\frac{1}{\sqrt{n}})^{3}\E\Vert(-V(L^{\star}))^{-\frac{1}{2}}\big((L^{\star}_{Z_{i}})^{-1} - (I +L^{\star})^{-1}\big) \Vert^{3}_{2}.
\end{eqnarray}
Since $\E\Vert(-V(L^{\star}))^{-\frac{1}{2}}\big((L^{\star}_{Z_{i}})^{-1} - (I +L^{\star})^{-1}\big) \Vert_{2}^{3}$ is a constant we get
\begin{equation}
    I_{21} \leq C_{6}\sqrt{\frac{N}{n}} \label{BETR}
\end{equation}
For $I_{22}$, since Z can be viewed as a standard Gaussian random vector, we have
\begin{eqnarray}
    I_{22} &=& 2\vert \bbP(x-k_{n}I <Z_{n} <x) \vert \nonumber \\
    &\leq& 2\sum_{i,j = 1}^{N} \bbP(x_{ij}-k_{n} \leq (Z_{n})_{ij}\leq x_{ij}) \nonumber \\
    &=& \frac{2N^{2}}{\sqrt{2\pi}}k_{n} \label{GuassianR}
\end{eqnarray}
Combining \eqref{BETR}, \eqref{GuassianR} with previous bound \eqref{e.3.32} to treat $I_{23}$, where we take $k_{n} = n^{-\frac{1}{4}}$ we conclude that
$$I_2 \leq \frac{C_{7}}{\sqrt[4]{n}}, $$
where $C_{5}$ is a constant.

\noindent{\bf Step 4}. 
As for $I_3$ we can use the same argument as above and conclude  that  $I_3$ is 
bounded by  $C_{8} \cdot n^{-\frac{1}{4}}$ for some constant $C_{8}$.
This completes the proof of the theorem. 
\end{proof}

\section{Simulation study}
In this section we test numerically the mle for the DPPs. Since in general
there is no analytic expression for the mle \eqref{theMLE}
we need to use numerical algorithms  to find the mle. We try  the well-known  Newton-Raphson algorithm and compare it with stochastic gradient descent algorithm.

%
%

\medskip 
\noindent{\bf Algorithms}.   First, we explain  the formula of the Newton-Raphson  algorithm applied to our mle of DPPs. Recall that the gradient of the maximum likelihood function is  given by \eqref{e.3.4}: 
\begin{equation}
    \diff \LF = \sum_{J \subseteq [N]} \hp_{J} L_{J}^{-1} - (L+I)^{-1}. \nonumber 
\end{equation}

To use Newton-Raphson algorithm  we think of $\diff \LF $ as an $N \times N$ vector-valued function $ (\diff \LF_{11}, \diff \LF_{12}, ..., \diff \LF_{1N}, \diff \LF_{21}, ..., \diff \LF_{N1}, ... \diff \LF_{NN})$ of $N \times N$ variables $ (L_{11}, L_{12}, ..., L_{1N}, L_{21}, ..., L_{N1}, ... L_{NN})$. Since 
\begin{align}
    \frac{\diff L^ {-1}}{\diff L_{ij}}
    &= - L^{-1} \cdot \frac{\diff L }{\diff L_{ij}}\cdot L^{-1} \nonumber \\
    &= - [L^{-1}_{ki} L^{-1}_{jl}]_{kl},
\end{align} 
the Jacobian matrix of $\LF$ is given by an $N \times N$ by $N \times N$ matrix
\begin{align}
    \diff^2 \LF = &-\sum_{J \subseteq [N]} \hat{p}_{J}\big[(L^{-1}_{ki}L^{-1}_{jl})\mathbb{I}\{i \in J,j \in J,k\in J,l \in J \}\big]_{ij,kl} \nonumber \\
    &+ \big[ (L+I)^{-1}_{ki}(L+I)^{-1}_{jl} \big]_{ij,kl},
\end{align}

where the index $ij$ and $kl$ have order $(1,1), ...(1,N),(2,1),...(2,N),...(N,1),...(N,N).$

Then Newton-Raphson algorithm updated  at $(m+1)$-th iteration is given by
\begin{equation}\label{e.4.3} 
    L_{m+1} = L_{m} - (\diff^{2}\hat{\Phi}_{n}(L_{m}))^{-1} \cdot \diff \hat{\Phi}_{n}(L_{m}).
\end{equation}
The matrix  $\diff^{2}\hat{\Phi}_{n}(L_{m})$ is 
$N^2\times N^2$ and the computation of its inverse 
may need some time when $N$ is large.



%

Newton-Raphson algorithm is time consuming in particular when $N$ is large since 
we need to compute the inverse of $\diff^{2}\hat{\Phi}_{n}(L_{m})$ (which is an $N^2\times N^2$ matrix) in each iteration step.  For this reason  we try another method called stochastic gradient descent method. This method depends on a step size which we
choose to be  $\eta =0.1$ as well as initial kernel  $L_{0}$. At $(m+1)$-th iteration we do simple random sampling in our data set to pick one sample $Z_{\rho }$ and update our kernel by
\begin{equation}
    L_{m+1} = L_{m} + \eta \big(({L_{{m}_{Z_{\rho}}}})^{-1} - (L_m+I)^{-1}\big).
\end{equation}
In this way, we reduce high computational burden of Newton-Raphson algorithm \eqref{e.4.3} at each iteration but the algorithm has lower convergence rate.

\medskip 
\noindent{\bf Simulation results}. 
In simulation study we consider two $3$ by $3$ matrix kernels $H_1, H_2$ and
one $2$ by $2$ matrix kernel $H_3$.
To see the trend of convergence, for each kernel we generate 300, 1000, 3000, 10000, and 30000 synthetic samples using the algorithm proposed in \cite{hough2006determinantal}. 
 The simulation results show that Newton-Raphson method outperforms SGD. For the first DPP, Newton-Raphson algorithm shows a very slow trend of convergence to the true kernel $H_1$ whereas SGD only estimates the diagonal elements well. For the second independent DPP, both Newton-Raphson and SGD methods converge to the true kernel $H_2$ pretty well. For the last one, Newton-Raphson converges to a wrong critical point while SGD is numerically unstable. 
 
 Our simulation results of n = 30000 are summarized in Table \ref{table1}, which suggests learning the dependence and the repulsive behavior between items seems  beyond the scope of general numerical methods. This unsatisfactory result may be because on one hand  both algorithms involve computing the inverse of matrices at each iteration, which could yield large values and hence may not be stable. On the other hand, the likelihood function
         of a DPP is usually non-concave and has at least exponentially many saddle points \cite{brunel2017maximum}. This unsatisfactory performance of 
 the two well-known algorithms motivates us to find    
 an effective method suitable to  the maximum of a likelihood function
         of a DPP.  In the next section, this issue  
         is resolved for the two by two matrix kernel since we can obtain the 
         explicit maximizer in this case. 

\begin{center}
\begin{table}[]
\begin{tabular}{ c c c c }
\toprule
\textbf{True kernel} & \textbf{Initial kernel} & ${\textbf{Newton-Raphson}\atop k=100 \ \ \hbox{iterations}}$ & ${\textbf{SGD} \atop k=60000 \ \ \hbox{iterations}}$\\
\midrule\\
\addlinespace[-2ex]
$ \begin{pmatrix}
  1 & 0.2 & 0\\
  0.2 & 2 & 0.3 \\
  0 & 0.3 & 3 
\end{pmatrix} $ &
$ \begin{pmatrix}
  1 & 0.1 & 0\\
  0.1 & 1 & 0.1 \\
  0 & 0.1 & 1 
\end{pmatrix}$ &
 $\begin{pmatrix}
  0.994 & 0.167 & -0.048\\
  0.167 & 2.013 & 0.241 \\
  -0.048 & 0.241 & 3.029
\end{pmatrix}  $ &
$\begin{pmatrix}
  0.926 & 0.015 & 0.001\\
  0.015 & 2.323 & 0.025 \\
  0.001 & 0.025 & 3.068 
\end{pmatrix}  $\\
\addlinespace[1.5ex]
$ \begin{pmatrix}
  7 & 0 & 0\\
  0 & 5 & 0 \\
  0 & 0 & 9 
\end{pmatrix} $ &
$ \begin{pmatrix}
  1 & 0 & 0\\
  0 & 1 & 0 \\
  0 & 0 & 1 
\end{pmatrix} $ &
$ \begin{pmatrix}  7.009 & 0 & 0 \\ 0 &  5.070 & 0 \\ 0 & 0& 9.0348 \end{pmatrix}$ &
$\begin{pmatrix}
  7.002 & 0 & 0\\
  0 & 5.156 & 0 \\
  0 & 0 & 9.035
\end{pmatrix} $ \\
\addlinespace[1.5ex]

$ \begin{pmatrix}
  1 & 1\\
  1 & 2\\
\end{pmatrix} $ &
$ \begin{pmatrix}
  0.5 & 0.1\\
  0.1 & 0.5\\
\end{pmatrix} $ &
$ \begin{pmatrix}
  0.657 & 0\\
  0 & 1.499\\
\end{pmatrix} $ &
Numerically unstable

\\
\bottomrule
\end{tabular}
\caption{ We test the performance of algorithms for three different true kernels when the sample size is 30000. We obtain the result after a specific number $k$ of iterations. In the experiment, we observe that both algorithms become unstable as iteration increases to a certain certain point so we choose $k=100$ for Newton-Raphson and $k=60000$ for SGD since for the later one each iteration needs less time to complete.}
\label{table1}
\end{table}
\end{center}

\section{Two-by-two block kernel}
Due to non-identifiability of DPPs  illustrated  in \cite[Theorem 4.1]{kulesza2012learning} that we mentioned earlier in Section 3, we have to 
use $\Tilde L_n$ defined by \eqref{e.3.16a} as our maximum likelihood estimator.   This formula involves a choice of $\hat D_n$ which may also be random.  However, the way  to choose $\hat  D_n$ is not available 
  to us in general.  To get rid of this choice of $\hat  D_n$ 
  we show in this section that if the kernels of determinantal point processes are two-by-two symmetric positive 
semi-definite matrices, we can obtain the desired results without involving the choice of $\hat  D_n$.  
Our  result can also be immediately extended  to any diagonally  blocked  two by two   matrices. Since the commonly used iteration  methods presented in the previous section are not so effective in finding the maximum likelihood estimator  numerically  this explicit analytic form we obtained is also very useful in compute the estimator numerically
in practice.   On the other hand, 
 Our method effective  to  two by two matrices is difficult to apply to more general higher dimensional kernels. 

Let $Z \sim \text{DPP}(L^{\star})$, where $L^{\star} = \bigg(\begin{matrix}
	a^* & b^*\\
	b^* & c^*
\end{matrix}\bigg)$, and the ground set be $\cY = [2]$. To ensure $L^\star\in \mathcal{S}_{[2]}^{++}$, we assume $$a^*, c^* > 0$$ and $$ a^*c^* -{b^*}^{2} \geq 0.$$
We can always assume $b$ is non-negative since by identifiability of DPPs, $\bigg(\begin{matrix}
	a & b\\
	b & c
\end{matrix}\bigg)$ and $\bigg(\begin{matrix}
	a & -b\\
	-b & c
\end{matrix}\bigg)$  give the same DPP.
For ease of notation, let $\hp_{0}, \hp_{1},\hp_{2},\hp_{3}$ denote the empirical probability of the subset $\{\emptyset\}$, $\{1\}$, $\{2\}$, $\{1,2\}$ respectively and let $p_{0},p_{1}, p_{2}, p_{3}$ denote the theoretical probability respectively.
The relationship between $(a,b,c) $ and $(p_0, p_1, p_2, p_3)$ are given by
\begin{equation*} \label{relation}
	( {a}, {b}, {c}) = \Big( \frac{  p_{1}}{ p_{0}},
	\frac{\sqrt{ p_{1} p_{2}-  p_{0} p_{3}}}{ p_{0}},
	\frac{ p_{2}}{ p_{0}} \Big) ,
\end{equation*}
and inversely 
\[
\begin{split}
	p_0=& \frac{1}{(a+1)(c+1)-b^2}\,, \qquad 
	p_1=  \frac{a}{(a+1)(c+1)-b^2}\,, \\
	p_2=&\frac{c}{(a+1)(c+1)-b^2}\,,\qquad 
	p_3= \frac{ac-b^2}{(a+1)(c+1)-b^2}\,. 
\end{split}
\]	

The  likelihood function defined in \eqref{LF}  becomes now  
\begin{align} \label{LFtwo}
	\LF &= \sum_{J\in [2]} \hp_{J} \log(L_{J}) - \log \det(L+I) \nonumber \\
	&= \hp_{1} \log a +\hp_{2}\log c +\hp_{3}\log(ac-b^{2}) - \log [(a+1)(c+1)-b^{2}]
\end{align}
To find the critical point of \eqref{LFtwo}  we first let the partial derivative of $\LF$ with respect to $b$ equal zero and get
\begin{align}
	\frac{\partial \LF}{\partial b} =& \frac{2\hp_{3}b}{ac-b^2}+ \frac{2b}{(a+1)(c+1)-b^2} =0 .
\end{align}
Then we have $b$ is either equal to 0 or 
\begin{equation}
	b^{2} = \frac{ac-(a+1)(c+1)\hp_{3}}{1-\hp_{3}}. \label{b}
\end{equation}

If $b =0$, then by setting the partial derivative with respect to $a$ and $c$ to zero and notice that $\hp_{0} + \hp_{1} + \hp_{2} + \hp_{3} = 1$ we get the first critical point
\begin{equation}
	(\hat{a},\hat{b},\hat{c}) = \Bigg(\frac{\hp_{1} +\hp_{3}}{\hp_{0}+\hp_{2}}, 0,  \frac{\hp_{2}+\hp_{3}}{\hp_{0}+\hp_{1}}\Bigg). 
 \label{1critical}
\end{equation}
We remark that this is the point which Newton-Raphson method converges to in the last simulation study.
This critical point exists only if $\hp_{0} + \hp_{2}$ and $\hp_{0}+\hp_{1}$ is nonzero. Since empirical probability converges to its corresponding theoretical probability almost surely and $p_{0} >0 $, the strong law of large numbers implies the critical point exists almost surely when $n$ is sufficiently large.

If $b\not=0$, then we can use \eqref{b} to estimate $\hat b$ once $\hat a, \hat c$ are obtained:
\begin{equation}
	\hat b  = \sqrt{\frac{\hat a\hat c-(\hat a+1)(\hat c+1)\hp_{3}}{1-\hp_{3}}}. \label{hatb}
\end{equation}
To find the maximum likelihood estimators $\hat a$ and $\hat c$ of $a$ and $c$ 
we plug \eqref{b} into $\LF$ to obtain  
\begin{equation}
	\LF = \hp_{1} \log a +\hp_{2} \log c + (\hp_{3} -1)\log (a+c+1) -(\hp_{3}-1)\log\frac{\hp_{3}}{1-\hp_{3}} + \log\hp_{3}.
\end{equation}
Letting  $\frac{\partial \LF}{\partial a}$ and $\frac{\partial \LF}{\partial c}$ equal zero yields 
\begin{align}
	\begin{cases} \frac{\partial \LF}{\partial a} = 
		\frac{\hp_{1}}{a} + \frac{\hp_{3}-1}{a+c+1} = 0 \\
		\frac{\partial \LF}{\partial c} =  \frac{\hp_{2}}{c}+\frac{\hp_{3}-1}{a+c+1} =0.\\
	\end{cases} 
\end{align}
The above system of function equations can be explicitly solved  and  combining it together with \eqref{hatb} yields
\begin{equation} \label{estimator}
	(\hat{a},\hat{b},\hat{c}) = \Big( \frac{\hp_{1}}{\hp_{0}},
	\frac{\sqrt{\hp_{1}\hp_{2}-\hp_{0}\hp_{3}}}{\hp_{0}},
	\frac{\hp_{2}}{\hp_{0}} \Big) ,
\end{equation}
from which we have this critical point exists only if $\hp_{0} > 0$ and $\hp_{1}\hp_{2}-\hp_{0}\hp_{3} \geq 0$. Again by strong laws of large numbers, the second critical point also exists and converges to the true value almost surely. In fact,  we have  almost surely,
$$\frac{\hp_{1}}{\hp_{0}} \to \frac{p_{1}}{p_{0}} = a^*, \quad \frac{\sqrt{\hp_{1}\hp_{2}-\hp_{0}\hp_{3}}}{\hp_{0}} \to \frac{\sqrt{p_{1}p_{2}-p_{0}p_{3}}}{p_{0}} = b^*, \quad \frac{\hp_{2}}{\hp_{0}} \to c^* .$$
We test the estimator of the 2 by 2 kernel in earlier simulation study by taking $n = 300, 3000, 10000, 30000, $   and find the estimates respectively are
$$ \begin{pmatrix}
  0.8598 & 0.8661\\
  0.8661 & 1.9159\\
\end{pmatrix}, 
\begin{pmatrix}
  0.9454 & 0.9184\\
  0.9184 & 1.9868\\
\end{pmatrix}, 
 \begin{pmatrix}
  1.0247 & 1.0253\\
  1.0253 & 2.0241\\
\end{pmatrix}, 
\begin{pmatrix}
  0.9942 & 1.0056\\
  1.0056 & 2.0015\\
\end{pmatrix}, 
$$
which shows the estimator \eqref{estimator} is consistent numerically to the true kernel.

Furthermore, we can establish the central limit theorem for the estimator \eqref{estimator}, which corresponds to the result in Theorem \ref{CLT}.
\begin{theorem}
	Assume $ b > 0 $, then the estimator $(\hat{a},\hat{b},\hat{c})$ in {\eqref{estimator}} is asymptotically normal,
	\begin{equation}
		\sqrt{n} ( (\hat{a},\hat{b},\hat{c}) -(a^*,b^*,c^*)) \xrightarrow[n \longrightarrow \infty]{} \mathcal{N} (\bm{0},-V({a^*,b^*,c^*})),
	\end{equation}
	where the convergence holds in distribution and  $V({a^*,b^*,c^*})$ is the inverse of the Hessian matrix of the expected maximum likelihood function $ \Phi(a,b,c) = p_{1} \log a +p_{2}\log c +p_{3}\log(ac-b^{2}) - \log [(a+1)(c+1)-b^{2}] $.
\end{theorem}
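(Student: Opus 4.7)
The plan is to derive this result as a direct consequence of the multivariate central limit theorem for the empirical probability vector $(\hat p_0, \hat p_1, \hat p_2, \hat p_3)$ combined with the multivariate delta method, since the closed form \eqref{estimator} exhibits $(\hat a, \hat b, \hat c)$ as a smooth function of these empirical probabilities whenever $b^\star > 0$.

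First, I would observe that $(\hat p_0, \hat p_1, \hat p_2, \hat p_3)$ is the sample mean of i.i.d.\ indicator vectors $(\mathbbm{1}(Z_i=\emptyset), \mathbbm{1}(Z_i=\{1\}), \mathbbm{1}(Z_i=\{2\}), \mathbbm{1}(Z_i=\{1,2\}))$, so the classical multivariate CLT gives
\begin{equation*}
\sqrt{n}\bigl((\hat p_0, \hat p_1, \hat p_2, \hat p_3) - (p_0^\star, p_1^\star, p_2^\star, p_3^\star)\bigr) \xrightarrow{d} \mathcal{N}(\mathbf{0}, \Sigma),
\end{equation*}
where $\Sigma$ is the multinomial covariance matrix, $\Sigma_{ii} = p_i^\star(1-p_i^\star)$ and $\Sigma_{ij} = -p_i^\star p_j^\star$ for $i\neq j$ (singular, reflecting that the four components sum to one).

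Second, I would introduce the map
\begin{equation*}
\phi(p_0, p_1, p_2, p_3) = \Big(\tfrac{p_1}{p_0},\; \tfrac{\sqrt{p_1 p_2 - p_0 p_3}}{p_0},\; \tfrac{p_2}{p_0}\Big),
\end{equation*}
so that $(\hat a, \hat b, \hat c) = \phi(\hat p_0, \hat p_1, \hat p_2, \hat p_3)$ and $(a^\star, b^\star, c^\star) = \phi(p_0^\star, p_1^\star, p_2^\star, p_3^\star)$. The assumption $b^\star > 0$ combined with $p_0^\star > 0$ yields $p_1^\star p_2^\star - p_0^\star p_3^\star = (b^\star p_0^\star)^2 > 0$, so $\phi$ is continuously differentiable in an open neighborhood of $(p_0^\star, p_1^\star, p_2^\star, p_3^\star)$. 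Writing $J := D\phi|_{p^\star}$ for the $3\times 4$ Jacobian (whose entries are straightforward rational functions of $p_0^\star,\ldots,p_3^\star$), the multivariate delta method yields
\begin{equation*}
\sqrt{n}\bigl((\hat a, \hat b, \hat c) - (a^\star, b^\star, c^\star)\bigr) \xrightarrow{d} \mathcal{N}(\mathbf{0}, J \Sigma J^T).
\end{equation*}

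The remaining step, which I expect to be the main obstacle, is to identify $J \Sigma J^T$ with $-V(a^\star, b^\star, c^\star)$. Rather than verifying this identity by brute-force differentiation of $\Phi(a,b,c)$, I would argue as follows: since $b^\star > 0$, the matrix $L^\star$ is irreducible (its off-diagonal entry is nonzero, so it cannot be block-diagonal), and Theorem~\ref{CLT} applies to the maximum likelihood estimator $\TL$. The critical point analysis leading to \eqref{estimator} together with \eqref{1critical} shows these are the only critical points of \eqref{LFtwo}, and a short comparison of likelihood values, using that $\hat p_1 \hat p_2 - \hat p_0 \hat p_3$ converges almost surely to $(b^\star p_0^\star)^2 > 0$, identifies \eqref{estimator} as the global maximum for all sufficiently large $n$. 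Hence $(\hat a, \hat b, \hat c)$ coincides with the three free parameters of $\TL$ eventually almost surely, and by uniqueness of weak limits, $J \Sigma J^T$ must equal the restriction of $-V(L^\star)$ in Theorem~\ref{CLT} to these three free coordinates of the symmetric $2\times 2$ kernel, which is exactly $-V(a^\star, b^\star, c^\star)$. As an independent algebraic check, one may compute the Hessian of the explicit $\Phi(a,b,c)$ directly and verify $J \Sigma J^T = -V(a^\star, b^\star, c^\star)$ entrywise.
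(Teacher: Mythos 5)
Your proposal follows essentially the same route as the paper: the paper likewise writes $(\hat p_0,\hat p_1,\hat p_2,\hat p_3)$ as the mean of i.i.d.\ indicator vectors with multinomial covariance $\bm\Sigma$, applies the multivariate delta method to the map $g(x_1,x_2,x_3,x_4)=(x_2/x_1,\sqrt{x_2x_3-x_1x_4}/x_1,x_3/x_1)$, and obtains the limit $\mathcal N(\bm 0,\dot g(\bm\mu)\bm\Sigma\dot g(\bm\mu)')$. The one place you diverge is the final identification of $J\Sigma J^T$ with $-V(a^*,b^*,c^*)$: the paper simply carries out the ``tedious'' computation of $\dot g(\bm\mu)\bm\Sigma\dot g(\bm\mu)'$ and checks it against the Hessian of $\Phi(a,b,c)$, whereas your shortcut via Theorem~\ref{CLT} and uniqueness of weak limits leans on two unproven claims --- that \eqref{estimator} rather than \eqref{1critical} is eventually the global maximizer (true, and cleanly seen because \eqref{estimator} makes the fitted probabilities match the empirical ones exactly, but you should say so), and that the relevant $3\times 3$ submatrix of $[\mathrm{d}^2\Phi(L^\star)]^{-1}$ (Hessian in the four vectorized entries of $L$) coincides with $[\mathrm{d}^2\Phi(a,b,c)]^{-1}$ (Hessian in the three free parameters), which is not a generic linear-algebra fact and itself needs verification. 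Since you already offer the direct entrywise check as a fallback, the argument is salvageable as written, but that check is in fact the substance of the paper's proof rather than an optional confirmation.
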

\begin{proof}
	Let $Z_{1},..., Z_{n}$ be n independent subsets of  $ Z \sim \text{DPP}(L^*) $, where $L^*= \bigg(\begin{matrix}
		a^* & b^*\\
		b^* & c^*
	\end{matrix}\bigg)$. Let $X_{i}$ be the random vector $(\mathbb{I}_{ \{Z_{i} = \emptyset\} },\mathbb{I}_{ \{Z_{i} = \{1\}\} },\mathbb{I}_{ \{Z_{i} = \{2\}\} },\mathbb{I}_{ \{Z_{i} = \{1,2\}\} })^{T}$, where $\mathbb{I}_{\{\cdot\}}$ stands for the indicator random variable. Then $X_{i}$ has mean $ \bm{\mu} = (p_{0},p_{1},p_{2},p_{3})^{T}$ and covariance matrix 
	$$ \mathbf{\Sigma} = 
	\begin{pmatrix}
		p_{0} - p_{0}^{2} & -p_{0}p_{1} & -p_{0}p_{2} & -p_{0}p_{3}\\
		-p_{0}p_{1} &p_{1} - p_{1}^{2}  & -p_{1}p_{2} & -p_{1}p_{3} \\
		-p_{0}p_{2} & -p_{1}p_{2} & p_{2} - p_{2}^{2}  & -p_{2}p_{3} \\
		-p_{0}p_{3} & -p_{1}p_{3} & -p_{2}p_{3} & p_{3} - p_{3}^{2} 
	\end{pmatrix}\,. 
	$$
	By central limit theorem, $\sqrt{n}(\overline{X}_{n} - \bm{\mu})$ converges to a multivariate normal distribution with mean $\bm{0}$ and covariance $\bm{\Sigma}$. Let a function $g: \mathbb{R}^4 \to \mathbb{R}^3$ be defined by 
	$$ g(x_{1},x_{2},x_{3},x_{4}) = (\frac{x_{2}}{x_{1}},\frac{\sqrt{x_{2}x_{3}-x_{1}x_{4}}}{x_{1}},\frac{x_{3}}{x_{1}}). $$
	Its Jacobi matrix $\dot{g}(\bm{x}) = \big[\frac{\partial g_{i}}{\partial x_{j}}\big]_{3\times4}$ is given by  
	\begin{equation*}
		\begin{pmatrix}
			-\frac{x_{2}}{x_{1}^2} & \frac{1}{x_{1}}& 0 & 0\\
			-\frac{x_{4}}{2x_{1}\sqrt{x_{2}x_{3}-x_{1}x_{4}}} - \frac{\sqrt{x_{2}x_{3}-x_{1}x_{4}}}{x_{1}^2} &\frac{x_{3}}{2x_{1}\sqrt{x_{2}x_{3}-x_{1}x_{4}}}  &\frac{x_{2}}{2x_{1}\sqrt{x_{2}x_{3}-x_{1}x_{4}}}  & -\frac{1}{2\sqrt{x_{2}x_{3}-x_{1}x_{4}}} \\
			-\frac{x_{3}}{x_{1}^2}& 0& \frac{1}{x_{1}}  & 0 
		\end{pmatrix}.
	\end{equation*}
	Now we are in the position to apply Delta method \cite{van2000asymptotic} to get 
	$$\sqrt{n}\big((\hat{a},\hat{b},\hat{c})-(a^*,b^*,c^*)\big) = \sqrt{n}\big(g(\overline{X}_{n}) - g(\bm{\mu})\big)\xrightarrow[]{d} \mathcal{N}(\bm{0},\dot{g}(\bm{\mu})\bm\Sigma \dot{g}(\bm{\mu})' ). $$
	After tedious matrix computations, $\dot{g}(\bm{\mu})\bm\Sigma \dot{g}(\bm{\mu})'$ is found to be 
	\begin{equation*} 
		\hspace{-0.5in}
		D\begin{pmatrix} 
			(a^*+{a^*}^2) &  \sigma_{12}  & \sigma_{13} \\
			\sigma_{12}  & \frac{\frac{a^*c^*}{{b^*}^2}-1}{4}D + \frac{a^*+c^*+4a^*c^*}{4} & \sigma_{23} \\
			\sigma_{13}  &\sigma_{23}  & c^*+{c^*}^2 
		\end{pmatrix},
	\end{equation*} 
	where 
	\[
	\left\{\begin{split}
		D = & (a^*+1)(c^*+1) - {b^*}^2\,; \\
		\sigma_{12} =& (\frac{a^*c^*}{2b^*}+a^*b^*+\frac{a^*}{2b^*}(a^*c^*-{b^*}^2))\,; \\
		\sigma_{13} =&a^*c^*\,;\\
		\sigma_{23}=&\frac{a^*c^*}{2b^*} +b^*c^* + \frac{c^*}{2b^*}(a^*c^*-{b^*}^2)\,.\\
	\end{split}  \right.
	\]
	It is straightforward to verify the above matrix is the inverse of the Hessian matrix of the expected maximum likelihood function $\Phi(L)$, that is, $-V(a^*,b^*,c^*)$, which in turn verifies Theorem \ref{CLT} in this special case. However, in this two-by-two case, our maximum likelihood estimator is unique without the maneuver of the identifiability  \eqref{e.3.16a}.
\end{proof}

This idea can be extended to blocked ensemble with the two by two block 
submatrices. 
If $L^{\star}$ is a matrix with k two-by-two blocks $J_{1}, ..., J_{k}$
\begin{equation}
	\begin{pmatrix}
		J_1  &      &    &    \\
		&     J_2     &  & \\
		&     &        \ddots   & \\
		&      &       &       J_k       
	\end{pmatrix}=
	\begin{pmatrix}
		a_{1} & b_{1} &      &     & & \\
		b_{1} & c_{1} &      &      & & \\
		&     & a_{2} & b_{2} &      &   \\
		&      & b_{2} & c_{2} &      &   \\
		&     &       &       & \ddots    \\
		&      &       &       &      & a_{k}  & b_{k}    \\
		&       &       &       &      & b_{k}   & c_{k} 
	\end{pmatrix},
	\label{e.4.10}
\end{equation}
where for each $1\leq i \leq k$, $a_{i}$, $b_{i}$, $c_{i} > 0$ and $a_{i}c_{i} -b_{i}^{2} \geq 0$. Let ground set $\cY$ of this DPP be $\{J_{1}^{1}, J_{1}^{2},J_{2}^{1},J_{2}^{2},...,J_{k}^{1},J_{k}^{2}\}$ and for each $1\leq i \leq k$,
\begin{align}
	\hp_{J_{i}}^{0} =& \frac{1}{n} \sum_{m=1}^{n} \mathbb{I} \{ J_{i}^{1} \notin Z_{m}, J_{i}^{2} \notin Z_{m} \} \\
	\hp_{J_{i}}^{1} =& \frac{1}{n} \sum_{m=1}^{n} \mathbb{I} \{ J_{i}^{1} \in Z_{m}, J_{i}^{2} \notin Z_{m} \} \\
	\hp_{J_{i}}^{2} =& \frac{1}{n} \sum_{m=1}^{n} \mathbb{I} \{ J_{i}^{1} \notin Z_{m}, J_{i}^{2} \in Z_{m} \} \\
	\hp_{J_{i}}^{3} =& \frac{1}{n} \sum_{m=1}^{n} \mathbb{I} \{ J_{i}^{1} \in Z_{m}, J_{i}^{2} \in Z_{m} \} ,
\end{align}
where $ Z_{1}, ..., Z_{n}$ are n independent subsets drawn from $\text{DPP}(L^{\star})$.
By the construction \eqref{e.4.10} we  see that  $Z\cap J_{1}, ..., Z \cap J_{k}$ are mutually independent.
Then the result of critical point for two by two matrix can be applied:
\begin{equation}
	(\hat{a}_{i}, \hat{b}_{i},\hat{c}_{i}) =
	\Bigg(\frac{\hp_{J_{i}}^{1}}{\hp_{J_{i}}^{0}},
	\frac{\sqrt{\hp_{J_{i}}^{1}\hp_{J_{i}}^{2} -\hp_{J_{i}}^{0}\hp_{J_{i}}^{3}}}{\hp_{J_{i}}^{0}}
	, \frac{\hp_{J_{i}}^{2}}{\hp_{J_{i}}^{0}}\Bigg),
\end{equation}
for every $1 \leq i\leq k$. 

However the above  method encounters some  difficulties when the kernel has dimension higher than 2 with general form. For example, if the kernel is a $3 \times 3$ matrix
$$\begin{pmatrix}
	a & d & e\\
	d & b & f\\
	e & f & c 
\end{pmatrix},$$
the letting  the gradient of likelihood   function $\LF$ equal zero will yield 
\begin{align}
	\diff \LF =& \sum_{J \subseteq [3]} \hp_{J} L_{J}^{-1} - (L+I)^{-1} = 0 \,. \nonumber 
\end{align}
Computing $L^{-1}$ and $(L+I)^{-1}$ could  be troublesome. For example, $L^{-1}$ is:
$$\frac{1}{a(bc-f^{2}) -d(cd-ef)+e(df-be)}
\begin{pmatrix}
	bc-f^{2} & -cd+ef & -be+df\\
	-cd+ef & ac-e^{2} & de-af\\
	-be+df & de-af & ab-d^{2}
\end{pmatrix}$$
which is difficult to use to obtain explicit 
maximum likelihood estimator.

\section{Conclusion} 
In this paper,  we study the  maximum likelihood estimator for the ensemble matrix associated with    determinantal process. Brunel et al show that the expected likelihood function $\Phi(L)$ is locally strongly concave around true value $L^{\star}$ if and only if $L^\star$ is irreducible, since the Hessian matrix of $\Phi{(L)}$ at $L^ \star$ is negative definite. Then they prove the maximum likelihood estimator (MLE) is consistent with respect to   the convergence in probability and when $L^\star$ is irreducible they also obtained the central limiting theorem for the MLE.  We improve   their results to show that the MLE is also strongly consistent with respect to the  almost sure convergence. Moreover, we obtain the Berry-Esseen type result for the central limiting theorem and find the $n^{-\frac{1}{4}}$   rate of convergence of the MLE to normality. Last, we obtain the explicit form of the MLE where $L^{\star}$ is a two by two block matrix, 
 or a  block matrix  whose blocks are two
by two matrices.  This explicit form enables us to avoid the use of $\hat D_n$ in \eqref{e.3.16a} which is inconvenient in applications. The strong consistency and central limit theorem follows from these explicit forms, which demonstrates the general strong consistency and central limit theorem proved earlier. It would be interesting to find the explicit form of some particular higher dimensional DPPs. However, as the learning of maximum likelihood of DPPs is proven to be NP-hard, the explicit form for general ensembles, even if was found, would be very difficult to compute.

In addition to the maximum likelihood estimator there are also other approaches in lieu of  MLE.  Let us only  mention  one alternative  approach.
For all J such that $\vert J \vert \leq 1 $, we let
\begin{equation}
	\frac{\det(L_{J})}{\det(L+I)} = \hp_{J}, \label{higherdimension}
\end{equation}
where the left hand side is the theoretical probability of $J$ and the right hand  side 
is the empirical probability of $J$.   Taking $J=\{i\}$  suggests us  the following estimator for $L_{ii}$. 
\begin{equation}
\hat 	L_{ii} = \frac{\hp_{i}}{\hp_{0}}.
\end{equation}
Using equations \eqref{higherdimension} for $\vert J \vert = 2$ again we are able to determine the off-diagonal elements up to the sign
\begin{equation}\label{e.5.3} 
	L^{2}_{ij} = \frac{\hp_{i}\hp_{j} -\hp_{\emptyset}\hp_{\{i,j\}}}{\hp^{2}_{\emptyset}}, 
\end{equation}
where $i \neq j $. Notice that this is the maximum likelihood estimator when $L$ is two dimensional. There is a question on how to choose the sign  for $L_{ij}$ in
\eqref{e.5.3}, which has been resolved by \cite{urschel2017learning} with graph theory. We shall not pursue this direction in current paper.    
 
%
%
%
%

\bibliographystyle{alpha}
\bibliography{refs}

\end{document}